\newcommand{\lrb}[1]{\left [ #1 \right ] }
\newcommand{\lrbrace}[1]{\left \lbrace #1 \right \rbrace }
\newcommand{\lrrb}[1]{\left ( #1 \right ) }
\newcommand{\abs}[1]{\left| #1 \right|}
\newcommand{\set}[1]{\mathbb{#1}}
\newcommand{\nelmt}[1]{\; \epsilon \! \! \! / \; #1}
\newtheorem*{appendixtheorem}{Theorem}
\newtheorem{maintheorem}{Theorem}
\newtheorem{theorem}{Theorem}[section]
  \newtheorem{definition}[theorem]{Definition}
  \newtheorem{lemma}[theorem]{Lemma}
  \newtheorem{corollary}[theorem]{Corollary}
  \newtheorem{remark}[theorem]{Remark}
\begin{document}

\author{Sebastian Helmensdorfer}
\title{A model for the behaviour of fluid droplets based on mean curvature flow}
\maketitle

\begin{abstract}
The authors of \cite{naturephys} have observed the following remarkable phenomenon during their experiments. If two oppositely charged droplets of fluid are close  enough, at first they attract each other and touch eventually. Surprisingly after that the droplets are repelled from each other, if the initial strength of the charges is high enough. Otherwise they coalesce and form a big drop, as one might expect. 

We present a theoretical model for these observations, using mean curvature flow. The local asymptotic shape of the touching fluid droplets is that of a double cone, where the angle corresponds to the strength of the initial charges. Our model yields  a critical angle for the behaviour of the touching droplets and numerical estimates of this angle agree with the experiments. This shows, contrary to  general belief (see \cite{naturephys} and \cite{physrevletters}), that decreasing surface energy can explain the phenomenon.

To determine the critical angle within our model we construct appropriate barriers for the mean curvature flow. In \cite{angenentilmanen} Angenent, Chopp and Ilmanen manage to show the existence of one-sheeted and two-sheeted, self-expanding solutions with a sufficiently steep double cone as an initial condition. Furthermore they provide arguments for nonuniqueness even among the one-sheeted solutions. We present a proof for this, yielding a slightly stronger result. Using the one-sheeted self-expanders as barriers we can determine the critical angle for our model.
\end{abstract}




\section{Introduction}

In \cite{naturephys} the behaviour of oppositely charged droplets of fluid is investigated. Droplet motion induced by electrical charges occurs in a vast 
number of applications, including storm cloud formation, commercial ink-jet printing, petroleum and vegetable oil dehydration, electrospray ionization for 
use in mass spectrometry, electrowetting and lab-on-a-chip manipulations (see also \cite{naturephys}).

The phenomenon can be described as follows. Two close enough oppositely charged droplets of fluid attract each other and converge. Both, experiments and 
numerical simulations (see \cite{physrevletters}), provide evidence that a short-lived bridge is formed between the droplets, which instantly causes the 
charges to be exchanged. The bridge between the touching droplets has the local asymptotic shape of a double cone. Furthermore the charges determine the 
angle of the double cone, where a lower charge corresponds to a steeper cone (i.e. having a larger acute angle with the rotation axis). 

We want to study the behaviour of the system after the droplets have touched. One might think that coalescence occurs and that one big drop is formed. 
However experiments in \cite{naturephys} have shown that above a critical field strength the droplets do not coalesce after touching but are repelled from 
each other. 

Following an idea by P. Topping, we present a theoretical model for this phenomenon. It is assumed that after the two droplets have touched and exchanged 
their charges, the motion of the system is driven by minimization of energy. To model this mathematically we use the mean curvature flow which is the 
gradient flow of the surface area (see e.g. \cite{eckermcf}). Our model has two main advantages over the theoretical approaches presented in \cite{naturephys}
 and \cite{physrevletters}. Firstly we are able to show that minimization of surface energy can explain the observations from the experiments. Secondly 
our results are mostly independent from any assumption on the precise local conical shape formed by the touching droplets.

We define the double cone in $\set{R}^n$ for $0 < \alpha < \frac{\pi}{2}$ as
$$D_\alpha = \left\lbrace x \in \set{R}^n:\; \abs{\left(x_2,\ldots,x_n \right)}^2 = \tan^2 \alpha \cdot x_1^2 \right\rbrace .$$ 
The results we obtain from our model can be summarized as follows.

\emph{
Assume that two initially oppositely charged droplets of fluid after touching have the local shape of a smoothing of a double cone $ D_\alpha $ in 
$\set{R}^3$. Also assume that their motion is governed by minimization of area which we model using the mean curvature flow.
}

\emph{
Then there is a critical angle $ \alpha_{crit} $ with the following properties. If $ \alpha < \alpha_{crit}$ the droplets are repelled from each 
other. If the associated smoothing lies outside of $ D_\alpha $ and $ \alpha > \alpha_{crit}$ the droplets coalesce and form one big drop.
}

\emph{
Using appropriate barriers and a level-set flow argument we can conclude that $\alpha_{crit}$ is precisely the critical angle for the existence of one-sheeted, self-expanding evolutions of $D_\alpha \subset \set{R}^3$, which means $\alpha_{crit} \approx 66^\circ$.
}

These formulations are made precise in the sequel. The critical angle of $60^\circ-70^\circ$, observed during experiments (see \cite{naturephys}), agrees with our prediction.

A family of smooth, immersed hypersurfaces $ \left(M_t\right)_{t \in  I} $ ($ I $ a real interval) in $ \set{R}^n $ is called a solution of the mean 
curvature flow if
\begin{equation}
\dfrac{\partial x}{\partial t} = \mathbf{H}, \; \; \; \; x \in M_t,\; t \in I. 	\label{eq:MCF}
\end{equation}
$ \mathbf{H} = -H \upsilon$ is the mean curvature vector and $ \upsilon $ a choice of unit normal. (\ref{eq:MCF}) is equivalent to 
$ \frac{\partial x}{\partial t} = \triangle_{M_t}x $ ($\triangle_{M_t}$ is the Laplacian of $M_t$).

Based on the ideas in \cite{angenentilmanen} we present a new proof for the existence of one-sheeted self-expanders with the double cone  $D_\alpha$ for $\alpha$ large enough as an initial condition. We call a solution of (\ref{eq:MCF}) self-expanding if
\begin{equation}
M_t = \sqrt{t} \cdot M_1, \; \; \; \; t \in (0,\infty). \label{eq:selfexp}
\end{equation}
The singular initial condition $ D_\alpha $ is here understood to be attained locally in the sense of Hausdorff distance. 

\begin{maintheorem}
\label{thm:selfexp}
For $ n \geq 3 $ there exists a critical angle $ \alpha_{crit}^*(n) \in \lrrb{0,\frac{\pi}{2}}$ with the following properties. 

For any angle $ \alpha > \alpha_{crit}^* $ there exist at least three distinct, smooth, rotationally symmetric evolutions of the double cone 
$ D_\alpha $ which are self-expanding. Two of these evolutions are one-sheeted and one is two-sheeted. 

For $\alpha = \alpha_{crit}^*$ at least one one-sheeted and one two-sheeted self-expanding, smooth, rotationally symmetric evolution of $ D_\alpha $ exist.
\end{maintheorem}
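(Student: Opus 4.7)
The strategy is to exploit rotational symmetry to reduce the self-expander equation to a second-order ODE for the radial profile $u = u(x_1)$, and then to study two natural one-parameter families of solutions via a shooting argument. For one-sheeted self-expanders I would impose the symmetry conditions $u(0) = r_0 > 0$ and $u'(0) = 0$ at a putative waist, and extend the resulting solution to all of $\set{R}$ by reflection $x_1 \mapsto -x_1$; global existence (no blow-up in finite $x_1$, and $u$ staying strictly positive) must be established via ODE estimates adapted to the self-expanding structure. For two-sheeted self-expanders I would shoot from the axis, parametrising by the point $a > 0$ where the convex cap meets the $x_1$-axis and imposing the appropriate regularity condition at the vertex.

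The key geometric quantity is the asymptotic cone half-angle. For each solution in these families one shows that $u(x_1)/x_1$ tends to a positive finite limit $\tan \alpha$ as $x_1 \to \infty$, yielding continuous maps $\alpha_1 : (0,\infty) \to (0, \frac{\pi}{2})$ and $\alpha_2 : (0,\infty) \to (0, \frac{\pi}{2})$ in the respective parameters. Continuity follows from standard continuous dependence on initial data for ODEs, combined with a uniform estimate on the rate at which the profile approaches its asymptote.

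The heart of the argument is the qualitative behaviour of $\alpha_1$. The plan is to prove that $\alpha_1(r_0) \to \frac{\pi}{2}$ as $r_0 \to 0^+$ and as $r_0 \to \infty$ --- in the first limit by a comparison with two-sheeted expanders whose axis-touching point tends to $0$, forcing the enclosing one-sheeted surface to have nearly vertical asymptotes, and in the second by a rescaling argument which flattens the surface near its waist --- while simultaneously exhibiting at least one value $r_0^*$ with $\alpha_1(r_0^*) < \frac{\pi}{2}$, most naturally by constructing an explicit barrier in the spirit of \cite{angenentilmanen}. Continuity together with these boundary limits then forces $\alpha_1$ to attain a strict interior minimum, which I define to be $\alpha_{crit}^*$. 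For every $\alpha > \alpha_{crit}^*$ the intermediate value theorem produces at least two distinct waist values $r_0$ with $\alpha_1(r_0) = \alpha$, hence two one-sheeted self-expanding evolutions of $D_\alpha$; at $\alpha = \alpha_{crit}^*$ any minimising waist provides one such solution. A parallel but easier analysis of $\alpha_2$ shows that $a \mapsto \alpha_2(a)$ is surjective onto $(0, \frac{\pi}{2})$, yielding the required two-sheeted expander for every $\alpha \in (0, \frac{\pi}{2})$.

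The main obstacle is establishing the qualitative picture for $\alpha_1$: both the two boundary limits --- especially the small-$r_0$ one, where the solution degenerates and delicate comparison with the two-sheeted family is needed --- and the existence of a waist with $\alpha_1(r_0) < \frac{\pi}{2}$. The latter reduces to producing a suitable sub- or super-solution of the ODE for at least one angle $\alpha < \frac{\pi}{2}$, which is the technical core inherited from the Angenent-Chopp-Ilmanen approach; the remaining continuity and intermediate-value steps are then a relatively standard shooting package applied to the resulting ODE system.
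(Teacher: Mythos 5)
Your overall architecture is exactly the paper's: shoot from the waist with $u(0)=C>0$, $u_y(0)=0$, show the asymptotic half-angle $\alpha(C)$ is a continuous function of $C$ with values in $\left(0,\frac{\pi}{2}\right)$, show $\alpha(C)\to\frac{\pi}{2}$ as $C\to 0$ and $C\to\infty$, and conclude by the intermediate value theorem that every $\alpha$ above the attained minimum $\alpha_{crit}^*$ has at least two preimages (two one-sheeted expanders), while the two-sheeted expander exists for every angle. Minor differences: the paper gets the two-sheeted solution for free from Ecker--Huisken's graphical flow applied to the entire Lipschitz graphs $C_\alpha$ and $C_{\pi-\alpha}$, rather than by a second shooting family; and the fact that \emph{every} one-sheeted solution has $\alpha(C)<\frac{\pi}{2}$ is already supplied by the clearing out lemma, so no separate barrier construction is needed to exhibit a single sub-$\frac{\pi}{2}$ value.

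The genuine gap is in the two boundary limits, which you correctly identify as the heart of the matter but do not actually prove, and the mechanisms you sketch for them are doubtful. First, the comparison of the small-waist one-sheeted expander with two-sheeted expanders cannot be run naively: all the initial cones $D_\alpha$, $D_\beta$ pass through the origin, so the avoidance principle does not apply to the singular initial data, and the two-sheeted profile meets the rotation axis away from the origin while the one-sheeted profile has its waist on the $u$-axis, so neither encloses the other in an obvious sense. The paper's actual device is Brakke's clearing out lemma: if $u(LC)\leq KC$ for small $C$, the area ratio of $M(u)$ in a ball of radius comparable to $LC$ about the waist is small, so the flow must vacate a concentric ball in time $O(L^2C^2)$, contradicting self-expansion, which forces $\sup u_y\to\infty$ as $C\to 0$. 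Second, and more importantly, you implicitly pass from ``the profile is steep near the waist'' to ``the asymptotic slope is large,'' but $u_y$ can decrease after its unique maximum, so steepness somewhere does not a priori control $\lim_{y\to\infty}u/y$. The paper needs a separate lemma for this: past the last zero of $u_{yy}$ the quantity $u/y$ is monotone increasing (a consequence of rewriting the ODE as $\left(\frac{u}{y}\right)_y=\frac{2}{y^2}\left(\frac{n-2}{u}-\frac{u_{yy}}{1+u_y^2}\right)$), and a further clearing-out/rescaling argument shows $u/y$ is already large at that inflection point. Without an argument of this kind your intermediate-value scheme does not close, because the boundary behaviour of $\alpha_1$ is unestablished.
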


Here ''one-sheeted'' and ''two-sheeted'' refer to the number of connected components of the solutions. The existence of one-sheeted self-expanders was first proved in \cite{angenentilmanen}. Additionally our proof provides nonuniqueness among one-sheeted solutions, which was also first stated in \cite{angenentilmanen}.

Nonuniqueness here corresponds to fattening of the level-set flow. For $\alpha < \alpha_{crit}^*$ the two-sheeted evolution of $D_\alpha$ is unique and therefore we have non-fattening of the level-set flow (see \cite{angenentilmanen}).

Using the one-sheeted self-expanders we can show that $\alpha_{crit} = \alpha_{crit}^*(3)$.

\section{Existence of self-expanders}

In this section we want to study the evolution by mean curvature of the double cone $ D_\alpha $, following \cite{angenentilmanen}. We are interested in 
solutions $ \left(M_t \right)_{t \in (0,\infty)} $ that satisfy
\begin{equation}
M_t \text{ is rotationally symmetric and self-expanding.} \label{eq:rotselfexp}
\end{equation}
From (\ref{eq:MCF}) and (\ref{eq:selfexp}) one can see that for a solution of the mean curvature flow to move self-expanding is equivalent to
the self-expanding equation
\begin{equation}
H = -\dfrac{x\cdot \upsilon}{2}, \; \; \; \;  x \in M_1. \label{eq:selfexp1}
\end{equation}
Under condition (\ref{eq:rotselfexp}) this equation becomes an ODE.

We write $ x = \left( x_1,x_2,\ldots,x_n \right) = \left(x_1,\hat{x} \right) $ for $ x \in \set{R}^n = \set{R}\times\set{R}^{n-1} $. For a curve 
$ \gamma $ in $ \set{R}^2 $, that is symmetric with respect to $ u \mapsto -u $ we define the corresponding surface of rotation 
$$ M \left( \gamma \right) =  \left\lbrace \left(x_1,\hat{x} \right) \in \set{R}^n: \; \left(x_1,\abs{\hat{x}} \right) = \left(y,u \right) \in \gamma \right\rbrace.$$
For $ -\pi \leq \alpha \leq \pi $ let $ \sigma_\alpha $ be the closed ray $ \left\lbrace t (\cos \alpha, \sin \alpha): \; t \geq 0 \right\rbrace $ in 
$ \set{R}^2 $. The cone in $ \set{R}^n $ with angle $\alpha$ is defined as $ C_\alpha = M \left( \sigma_\alpha \cup \sigma_{-\alpha} \right) $. We 
get $ D_\alpha = C_\alpha \cup C_{\pi - \alpha} $. 

Following \cite{angenentilmanen} we get an equation for $ \gamma $ from (\ref{eq:rotselfexp}). Parametrizing $ \gamma $ by arclength $ s $ we define 
$ \theta \in [0,2\pi) $ along $ \gamma $ by setting $\gamma_s = (\cos \theta, \sin \theta)$ for the tangent vector $\gamma_s$. The left-handed unit normal is given by 
$ \overline{\upsilon} = (-\sin \theta, \cos \theta ) $.  Let $ \mathbf{k} $ be the curvature vector of $ \gamma $ and 
$ k = \mathbf{k} \cdot \overline{\upsilon} $. Equation (\ref{eq:rotselfexp}) for $ M \left( \gamma \right) $ becomes
\begin{equation}
k - \dfrac{n-2}{u} \cos \theta + \dfrac{y}{2} \sin \theta - \dfrac{u}{2} \cos \theta = 0, \; \; \; \; \left(y,u\right) \in \gamma . \label{eq:rotsymmexpcurve}
\end{equation}
A solution of this equation creates a smooth surface $ M\left( \gamma \right) $.

The following lemma is due to Angenent, Chopp and Ilmanen (see \cite{angenentilmanen}). The first part is a consequence of results from Ecker and Huisken on graphical mean curvature flow (see \cite{eckerhuisken}).

\begin{lemma} \label{lem:ilmanen1}
(i) (Two-sheeted case)

For $\alpha \in (0,\pi)$ there exists a unique, smooth, connected curve $ \gamma(\alpha) $, solving (\ref{eq:rotsymmexpcurve}) and asymptotic to $C_\alpha$. Furthermore, unless $\gamma$ is the $u$-axis, $\gamma$ is the graph of a positive, convex (or negative, concave) even function $ y=y(u) $.

(ii) (One-sheeted case)

Let $\gamma$ be another smooth, connected curve solving (\ref{eq:rotsymmexpcurve}) which meets $ \left\lbrace u > 0 \right\rbrace $. Then $\gamma$ lies in 
$ \left\lbrace u > 0 \right\rbrace $ and is asymptotic to $ \sigma_\alpha \cup \sigma_\beta $, where $ 0<\alpha<\beta<\pi $. If $ \gamma $ meets the 
$ u $-axis at a right angle, then $ \gamma $ is the graph of a positive, even function $ u=u\left( y \right) $ which is monotone for $ y \neq 0 $ and 
$ \beta = \pi - \alpha $.
\end{lemma}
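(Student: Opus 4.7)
For part (i), since each sheet $C_\alpha$ is a Lipschitz graph over the radial hyperplane with linear growth, I would invoke the Ecker--Huisken theory of graphical mean curvature flow. This produces a unique smooth evolution $(M_t)_{t>0}$ attaining $C_\alpha$ in the Hausdorff sense as $t \to 0^+$. The scale invariance $\lambda^{-1} C_\alpha = C_\alpha$ combined with uniqueness forces the self-similar identity $M_t = \sqrt{t}\, M_1$, so $M_1$ is the desired self-expander, and writing it as a graph produces $\gamma(\alpha)$. The rotational symmetry of $C_\alpha$ and uniqueness force $M_1$ to be rotationally symmetric, so $\gamma$ is the graph of an even function $y = y(u)$. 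For $\alpha \neq \pi/2$, strict positivity or negativity of $y$ follows from the strong maximum principle applied to $M_t$ against the hyperplane $\{x_1 = 0\}$. Convexity (or concavity) propagates from the weakly convex cone via the usual maximum principle applied to the evolution equation for the second fundamental form, with strictness again coming from the strong maximum principle.

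For part (ii), the first step is to show $\gamma \subset \{u>0\}$. If $\gamma$ touched the axis $\{u=0\}$ at a point $p$, smoothness of $M(\gamma)$ at the corresponding axis point of $\mathbb{R}^n$ would force $\gamma$ to cross $\{u=0\}$ perpendicularly. The reflection symmetry $u \mapsto -u$ of (\ref{eq:rotsymmexpcurve}), together with ODE uniqueness (after the standard desingularization at $u=0$), would identify $\gamma$ near $p$, and then globally by continuation, with a solution of the type produced in part (i). But part (i) solutions are asymptotic to a single cone $C_\alpha$, contradicting the hypothesis that $\gamma$ is \emph{another} solution. Hence $\gamma \subset \{u>0\}$. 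To extract the two asymptotic rays, I would perform a blow-down analysis: the rescaled curves $\gamma_R = R^{-1}\gamma$ generate surfaces satisfying a rescaled self-expander equation, and as $R \to \infty$ the curvature term $k$ and the $\tfrac{n-2}{u}\cos\theta$ term become negligible on compacts away from the origin, so $\gamma_R$ subconverges to a union of rays through the origin obeying the reduced identity $y\sin\theta - u\cos\theta = 0$. This identifies the two ends of $\gamma$ with rays $\sigma_\alpha, \sigma_\beta$ in $\{u>0\}$ with angles in $(0,\pi)$; distinctness of the two asymptotes (forcing $\alpha \neq \beta$ after ordering) comes from the fact that $\gamma$ is connected but not a closed loop.

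For the final assertion, suppose $\gamma$ meets the $u$-axis $\{y=0\}$ perpendicularly at some $(0,u_0)$. The ODE (\ref{eq:rotsymmexpcurve}) is invariant under $y \mapsto -y$ (combined with $\theta \mapsto \pi - \theta$), so uniqueness applied to the symmetric initial data at $(0,u_0)$ forces $\gamma$ to coincide with its own reflection about $\{y=0\}$; hence $\gamma$ is the graph of an even function $u=u(y)$. Strict monotonicity of $u(y)$ for $y \neq 0$ follows by the same reflection trick: any further critical point of $u$ at some $y_1 > 0$ would generate a second axis of symmetry for $\gamma$, incompatible with the asymptotic ray structure already established. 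The identity $\beta = \pi - \alpha$ is then immediate, since reflection across $\{y=0\}$ sends the asymptote $\sigma_\alpha$ of one end to the asymptote $\sigma_{\pi-\alpha}$ of the other.

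The main obstacle is the asymptotic identification of rays in part (ii). The heuristic is clear — at large scales, the curvature and axial terms are lower order and the self-expander equation degenerates to the cone equation $y\sin\theta - u\cos\theta = 0$ — but making this rigorous requires either a careful phase-plane analysis of (\ref{eq:rotsymmexpcurve}) under a suitable change of variables, or a blow-down compactness argument ruling out pathologies such as oscillating tangents or asymptotic angles collapsing to $0$ or $\pi$. Everything else (graphical existence in (i), convexity, and the reflection-symmetry consequences in (ii)) reduces to standard maximum-principle and ODE-uniqueness arguments.
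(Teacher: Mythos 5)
First, note that the paper does not prove Lemma \ref{lem:ilmanen1} at all: it is quoted from Angenent--Chopp--Ilmanen \cite{angenentilmanen}, with part (i) attributed to the Ecker--Huisken theory of graphical mean curvature flow \cite{eckerhuisken}. So there is no in-paper argument to compare against; judged on its own, your proposal is a reasonable outline of the standard proof (Ecker--Huisken existence plus scale invariance and uniqueness for (i); the singular term $\frac{n-2}{u}\cos\theta$ forcing perpendicular axis crossings, then reflection symmetry and ODE uniqueness for (ii)), but two points need attention.

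The first is the one you flag yourself: the asymptotic identification of the ends. As written the blow-down is not rigorous --- you have no a priori curvature bound on $R^{-1}\gamma$ away from the origin with which to extract a subsequential limit, the limit could a priori be a ray of angle $0$ or $\pi$ (an end sliding down to the axis $\lrbrace{u=0}$), and ``connected but not a closed loop'' does not prevent both ends from being asymptotic to the same ray, so the strict inequality $\alpha<\beta$ is unaddressed. The way this is actually handled, both in \cite{angenentilmanen} and in the paper's own later analysis of the graphical case, is by direct ODE/phase-plane arguments: the sign structure of (\ref{eq:rotsymmexpcurve}) shows that the angle $\arctan(u/y)$ has only strict local minima as critical points and hence converges (property \ref{prop3} of the paper), and the degenerate limits are excluded with Brakke's clearing out lemma (property \ref{prop4}, appendix \ref{appendixclearingout}). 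The second point is an actual error: your monotonicity argument in the last paragraph does not work, because (\ref{eq:rotsymmexpcurve}) contains $y$ explicitly and is therefore not invariant under reflection about $\lrbrace{y=y_1}$ for $y_1\neq 0$; a critical point of $u$ at $y_1>0$ creates no second axis of symmetry. The correct (and easier) argument is the paper's property \ref{prop1}: at any critical point the graph equation gives $u_{yy}=\frac{u}{2}+\frac{n-2}{u}>0$, so every critical point is a strict local minimum, and a $C^1$ function cannot have two strict local minima without an interior local maximum between them. You would also need to justify that the curve remains a global graph $u=u(y)$ (the tangent never becomes vertical), which again comes out of the same ODE analysis rather than out of symmetry.
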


We define $$ A = \left\lbrace \alpha \in \left(0,\frac{\pi}{2} \right): \; \exists \text{ connected } \gamma \text{ solving  (\ref{eq:rotselfexp}), asymptotic to } \sigma_\alpha \cup \sigma_{\pi-\alpha} \right\rbrace $$ and $ \alpha_{crit}^*(n) = \inf A $.

Now we want to focus on smooth, connected curves $ \gamma $ solving (\ref{eq:rotsymmexpcurve}) which are graphs of even functions (part (ii) of 
lemma \ref{lem:ilmanen1}). After imposing initial conditions we get the following initial value problem for $ u $
\begin{equation}
u_{yy} = \left(1+\left(u_y \right)^2 \right) \left( \frac{1}{2} \left(u-u_y y \right) + \frac{n-2}{u} \right), \; \; \; \; u(0)=C>0,\; u_y(0)=0. \label{eq:selfexprotcurgraph}
\end{equation}

For symmetry reasons it is enough to consider $ u \vert _{[0,\infty)} $ which  we denote again by $ u $. For $ y > 0 $ we denote by $ \alpha(y) = \arctan \left(\frac{u}{y} \right) $ the signed angle, that $ u $ makes with the positive  $ y $-axis. First we note the following basic observations regarding the solution $u$.

\begin{enumerate}
\renewcommand{\theenumi}{\roman{enumi}}

\item Every critical point of $ u $ is a strict local minimum.\\ \label{prop1}
\small {To see this compute $ u_{yy} = \frac{u}{2} + \frac{n-2}{u} > 0 $ from (\ref{eq:selfexprotcurgraph}) whenever $ u_y = 0 $.}

\item $ u_y > 0 $ for all $ y > 0 $ and every critical point of $ u_y $ is a strict local maximum.\\ \label{prop2}
\small {The first part is a direct consequence of \ref{prop1} and the fact that $ u_{yy}(0) = \frac{C}{2} + \frac{n-2}{C} > 0 $. For the second part compute 
$ u_{y}^{(3)} = -\left(1+\left(u_y \right)^2 \right) (n-2) \frac{u_y}{u^2} < 0 $ from (\ref{eq:selfexprotcurgraph}) using the first part  whenever 
$ u_{yy} = 0 $.}

\item Every critical point of $ \alpha \left( y \right) $ is a strict local minimum. Therefore $ u $ is asymptotic to a ray 
$ \sigma_\alpha $, $ \alpha =  \lim_{y \rightarrow \infty} \alpha \left( y \right)$.\\ \label{prop3}
\small {Let $ 0= \alpha_y = \frac{1}{u^2+y^2} \left(yu_y-u \right), \; y > 0$. Using (\ref{eq:selfexprotcurgraph}) we get 
$ 0=yu_y-u= \frac{2(n-2)}{u}-\frac{2u_{yy}}{1+ \left(u_y\right)^2}$ and therefore $ u_{yy} >0 $. But then $ \alpha_{yy} = \frac{yu_{yy}}{u^2+y^2}>0 $.}

\item $ 0 <  \alpha =  \lim_{y \rightarrow \infty} \alpha \left( y \right) <  \frac{\pi}{2}$. \\ \label{prop4}
\small {This is poved in \cite{angenentilmanen} using the clearing out lemma (see appendix \ref{appendixclearingout}).}

\item $u_{y} \rightarrow \tan \alpha $ as $ y \rightarrow \infty $ and hence $ \abs{u_y} $ is bounded. \\ \label{prop5}
\small {Since $ u_{yy}(0) > 0 $, \ref{prop2} shows that $ u_{y} $ is either strictly monotone increasing or has one strict local maximum. Therefore 
\ref{prop3} and \ref{prop4} yield the claim. }

\item $u_{yy} $ has at most one zero. Therefore $u_{yy} \rightarrow 0$ as $ y \rightarrow \infty $. \\ \label{prop6}
\small { This follows from \ref{prop2} and \ref{prop5}.}

\end{enumerate}

To prove theorem \ref{thm:selfexp} we show first that the asymptotic angle $ \lim_{y\rightarrow \infty}  \alpha \left( y \right)  $ of $ u $ goes to $\frac{\pi}{2}$, when $ C $ goes to $ 0 $ or $ \infty $ in (\ref{eq:selfexprotcurgraph}). Together with continuous dependence of the solutions $u$ on $C > 0$ this is enough to prove theorem \ref{thm:selfexp}.

As a first step we show that the first derivative of the solutions $u$ blows up, when $ C $ goes to $ 0 $ or $ \infty $ in (\ref{eq:selfexprotcurgraph}). In view of \ref{prop5} this is useful for investigating the behaviour of the corresponding asymptotic angles.

\begin{lemma}
\label{lem:derblowup}
 The solutions $u$ of (\ref{eq:selfexprotcurgraph}) satisfy $\sup_{[0,\infty)} u_y \rightarrow \infty$ if $C \rightarrow 0$ or $C \rightarrow \infty$.
\end{lemma}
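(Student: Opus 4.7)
The plan is to treat the limits $C\to\infty$ and $C\to 0$ separately, since in (\ref{eq:selfexprotcurgraph}) the term $\tfrac{u-u_y y}{2}$ dominates the right-hand side as $C\to\infty$ whereas $\tfrac{n-2}{u}$ dominates as $C\to 0$; the two regimes therefore call for quite different arguments.

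For $C\to\infty$ I would argue directly. Suppose, for a contradiction, that $\sup_{[0,\infty)} u_y\le K$ for a fixed $K$ along a sequence $C_n\to\infty$. Property \ref{prop2} gives $u\ge C$ and the bound $u_y\le K$ gives $u_y y\le Ky$, so on the interval $[0,C/(2K)]$ one has $u-u_y y\ge C/2$. Dropping the positive factors $(1+u_y^2)\ge 1$ and $(n-2)/u>0$ in (\ref{eq:selfexprotcurgraph}) yields $u_{yy}\ge C/4$ throughout this interval, and a single integration gives $u_y\bigl(C/(2K)\bigr)\ge C^2/(8K)$. This exceeds $K$ as soon as $C>2\sqrt{2}\,K$, the desired contradiction.

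For $C\to 0$ I would use a rescaling argument. For each $C>0$ set $\tilde u(\tilde y):=u(C\tilde y)/C$, so $\tilde u(0)=1$, $\tilde u'(0)=0$, and (\ref{eq:selfexprotcurgraph}) becomes
$$\tilde u''=\bigl(1+(\tilde u')^2\bigr)\Bigl(\tfrac{C^2}{2}(\tilde u-\tilde u'\tilde y)+\tfrac{n-2}{\tilde u}\Bigr).$$
Letting $C\to 0$, this reduces to the rotationally symmetric minimal-surface (catenoid-type) equation $\tilde u_0''=(1+(\tilde u_0')^2)\tfrac{n-2}{\tilde u_0}$ with the same initial data (for $n=3$ solved by $\tilde u_0=\cosh$). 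The first key step is to verify $\sup\tilde u_0'=\infty$ by a secondary contradiction: if $\tilde u_0'\le M$, then $\tilde u_0\le 1+M\tilde y$, so $\tilde u_0''\ge(n-2)/(1+M\tilde y)$ and hence $\tilde u_0'(\tilde y)\ge\tfrac{n-2}{M}\ln(1+M\tilde y)\to\infty$. Next, supposing $\sup u_y\le K$ along a sequence $C_n\to 0$, the corresponding rescaled solutions $\tilde u_n$ and their derivatives are uniformly bounded on any compact $[0,Y]$; the ODE itself then furnishes a uniform $C^2$-bound on $[0,Y]$, and Arzel\`a--Ascoli together with uniqueness for the limit ODE gives $C^1$-convergence $\tilde u_n\to\tilde u_0$ on $[0,Y]$. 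Picking $Y$ with $\tilde u_0'(Y)>K+1$ and using $u_y(C_n Y)=\tilde u_n'(Y)$ produces the contradiction.

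The main obstacle I anticipate is in the $C\to 0$ case: one must identify the correct rescaling — the one that isolates $\tfrac{n-2}{u}$ as the leading term while pushing the self-expanding contributions to $O(C^2)$ — and then verify that the limit catenoid-type ODE really has unbounded slope. The $C\to\infty$ half, by contrast, is dispatched by a single direct comparison on one well-chosen interval.
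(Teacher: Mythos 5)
Your argument is correct, but it takes a genuinely different route from the paper's. For $C\to\infty$ the paper only observes from the rewritten ODE that $u(y_0)\to\infty$ forces $u_y(y_0)$ or $u_{yy}(y_0)$ to blow up and asserts the claim follows; your quantitative version (pointwise lower bound $u_{yy}\geq C/4$ on $[0,C/(2K)]$, then one integration) is tighter and fills in the step the paper leaves implicit, namely how a pointwise bound on $u_{yy}$ yields unboundedness of $\sup u_y$. For $C\to 0$ the two proofs diverge completely: the paper stays in the geometric setting, applying Brakke's clearing out lemma to the self-expander $M(u)$ in a ball of radius $\sim LC$ and playing the resulting clearing-out time against the self-expanding property $M_t=\sqrt{t}\,M_1$, then invoking continuous dependence on initial data; you instead rescale $\tilde u(\tilde y)=u(C\tilde y)/C$, identify the limit as the catenoid-type equation $\tilde u_0''=(1+(\tilde u_0')^2)\frac{n-2}{\tilde u_0}$, show its slope is unbounded, and transfer this back by Arzel\`a--Ascoli plus uniqueness. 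Your route is purely ODE-theoretic and self-contained (no geometric measure theory), and it avoids the somewhat delicate continuous-dependence step in the paper, where the solutions being compared have initial values $C_k\to 0$ rather than converging to a fixed positive value. One small point to make explicit: for $n\geq 4$ the limit solution $\tilde u_0$ blows up at a finite $\tilde y^*$ (one computes $1+(\tilde u_0')^2=\tilde u_0^{2(n-2)}$), so the $Y$ with $\tilde u_0'(Y)>K+1$ must be chosen inside the maximal existence interval of $\tilde u_0$; since $\sup\tilde u_0'=\infty$ there in every case, this is automatic, but the compactness argument should be phrased on $[0,Y]$ with $Y$ strictly below $\tilde y^*$ so that uniqueness for the limit ODE applies.
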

\begin{proof}
 First we treat the case $C \rightarrow \infty$. We can write (\ref{eq:selfexprotcurgraph}) as
$$\frac{n-2}{u} + \frac{u}{2} = \frac{yu_y}{2} + \frac{u_{yy}}{1+\lrrb{u_y}^2}.$$
Since $u(0) = C$, \ref{prop2} implies that $u \lrrb{y_0} \rightarrow \infty$ as $C \rightarrow \infty$ for any fixed $y_0 > 0$. Therefore the last equation
yields either $u_y \lrrb{y_0} \rightarrow \infty$ as $C \rightarrow \infty$ or $u_{yy} \lrrb{y_0} \rightarrow \infty$ as $C \rightarrow \infty$. In both cases we can deduce the claim.

For the case $C \rightarrow 0$ suppose not, so we may assume without loss of generality that $\sup_{[0,\infty)} u_y \leq D, \; D > 0$ for all $C > 0$ small enough. First we use the clearing out lemma (see appendix \ref{appendixclearingout}) to show that $u$ must be large compared to such $C > 0$ away from $0$. Therefore assume that for $C > 0$ small enough
$$ u \lrrb{LC} \leq K C, \; \; \; \; K > 0, \; L > 0 $$
for a constant $K$, where $L$ will be determined later. 

Now we want to apply the clearing out lemma (see appendix \ref{appendixclearingout}) to the self-expander $M(u)$ (considering $u$ as a function on $\set{R}$ here) with respect to the ball $B_{LC} \lrrb{0,C,0,\ldots,0} \subset \set{R}^n$. Estimating the Hausdorff measure yields
$$ \mathcal{H}^{n-1} \lrrb{M(u) \cap B_{LC} \lrrb{0,C,0,\ldots,0}} \leq D K^{n-2} L C^{n-1}$$
where now $D = D(n)$. Now choose $L\lrrb{D,K,n} > 0$, $L = K \lrrb{\frac{D}{\varepsilon_0}}^{\frac{1}{n-2}}$, to make $D K^{n-2} L C^{n-1} \leq \varepsilon_0 L^{n-1} C^{n-1}$, where $\varepsilon_0$ is the constant from the clearing out lemma.

Therefore the clearing out lemma says that the clearing out time $t_C$ of $M(u)$ with respect to $B_{LC/4} \lrrb{0,C,0,\ldots,0}$ can be estimated as $t_C \leq c L^2C^2$ for some constant $c > 0$. But since $M(u)$ moves self-expanding we also have $ \sqrt{1+t_C}C \geq C + \frac{LC}{4}$ and hence $t_C \geq \frac{L^2}{4}$ - a contradiction.

This means $u \lrrb{LC}$ must be large for any $C > 0$ small enough. More precisely there must be a sequence $\lrrb{C_k}_{k \in \set{N}}$ which satisfies
$$ \lim_{k \rightarrow \infty} LC_k = 0, \; u_k \lrrb{LC_k} \geq k C_k .$$
where $u_k$ are the associated solutions of (\ref{eq:selfexprotcurgraph}) with $u_k(0) = C_k$ and $L(D,k,n)$ as above. By \ref{prop2} we have 
$$ u_k \lrrb{LC_k} = C_k + \int_0^{LC_k} \lrrb{u_k}_{\tilde{y}} d \tilde{y} \leq C_k + LC_k \max_{\lrb{0,LC_k}} \lrrb{u_k}_y $$
and therefore $\max_{\lrb{0,LC_k}} \lrrb{u_k}_y \geq \frac{1}{D}, \; D = D\lrrb{n} > 0$.

Now we use continuous dependence to get contradiction to the last estimate. For any $k_0 \in \set{N}$ we can choose $y_{k_0} > 0$ sufficiently small to make $\lrrb{u_{k_0}}_y < \frac{1}{2D}$ on $\lrb{0,y_{k_0}}$. Since $u_y$ depends continuously on the initial values of (\ref{eq:selfexprotcurgraph}) we can choose $k_0 \in \set{N}$ such that $\max_{\lrb{0,y_{k_0}}} \abs{\lrrb{u_{k_0}}_y - \lrrb{u_k}_y} < \frac{1}{2D}$ for $k \geq k_0$. Therefore we get 
$$\abs{\lrrb{u_k}_y} \leq \abs{\lrrb{u_{k_0}}_y - \lrrb{u_k}_y} + \abs{\lrrb{u_{k_0}}_y} < \frac{1}{D}$$
on $\lrb{0,y_{k_0}}$ for any $k \geq k_0$ - a contradiction.

\end{proof}

From the last lemma we know that $u_y$ blows up somewhere on $\left( 0,\infty \right)$ as $C \rightarrow 0$ or $C \rightarrow \infty$. To get the desired behaviour of the asymptotic angles, it is therefore enough to show that $u_y$ does not decrease too much after a possible maximum. 

\begin{lemma}
For solutions $ u $ of (\ref{eq:selfexprotcurgraph}) we have $ \alpha \rightarrow \frac{\pi}{2} $ as $ C \rightarrow 0 $ or $ C \rightarrow \infty $.
\end{lemma}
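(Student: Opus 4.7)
The plan is to deduce the claim from the previous lemma (which gives $\sup u_y\to\infty$ in both limits) via the inequality $\tan\alpha\geq u(y^*)/y^*$, where $y^*$ is the point of maximum slope. If $u_{yy}\geq 0$ on $[0,\infty)$, then by property (ii) $u_y$ is monotone and $\sup u_y=\tan\alpha$, making the claim immediate. Otherwise, property (vi) gives a unique zero $y^*>0$ of $u_{yy}$ at which $u_y$ attains its strict maximum $M:=u_y(y^*)\to\infty$. Set $\phi:=u_yy-u$. From $u_{yy}(y^*)=0$ one has $\phi(y^*)=2(n-2)/u(y^*)>0$, while $\phi'=y\,u_{yy}\leq 0$ on $[y^*,\infty)$ and the ODE together with $u_{yy}\to 0$ and $u\to\infty$ force $\phi\to 0$ at infinity; hence $\phi>0$ strictly on $(y^*,\infty)$. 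Therefore $(u/y)'=\phi/y^2>0$ past $y^*$, so $u/y$ is strictly increasing to its limit and $\tan\alpha\geq u(y^*)/y^*$. Using the algebraic identity $My^*=u(y^*)+2(n-2)/u(y^*)$ this becomes
\[\tan\alpha\;\geq\; M-\frac{2(n-2)}{u(y^*)\,y^*},\]
and it suffices to show $u(y^*)/y^*\to\infty$ in both limits.

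\textbf{Case $C\to\infty$.} Here the monotonicity of $u$ gives $u(y^*)\geq C$, and then $My^*\geq u(y^*)\geq C$ yields $u(y^*)\,y^*\geq C^2/M$. Substituting,
\[\tan\alpha\;\geq\;M\Bigl(1-\tfrac{2(n-2)}{C^2}\Bigr)\;\longrightarrow\;\infty.\]

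\textbf{Case $C\to 0$ (main obstacle).} The bound $u(y^*)\geq C$ is now too weak. My plan is to rescale by setting $\tilde u(\tilde y):=u(C\tilde y)/C$, for which
\[\tilde u_{\tilde y\tilde y}=(1+\tilde u_{\tilde y}^2)\Bigl[\tfrac{C^2}{2}(\tilde u-\tilde u_{\tilde y}\tilde y)+\tfrac{n-2}{\tilde u}\Bigr],\quad\tilde u(0)=1,\;\tilde u_{\tilde y}(0)=0.\]
As $C\to 0$ this formally tends to the catenoid (minimal surface of revolution) equation $\tilde v_{\tilde y\tilde y}=(1+\tilde v_{\tilde y}^2)(n-2)/\tilde v$ with the same initial data, and by continuous dependence $\tilde u\to\tilde v$ uniformly on compact $\tilde y$-intervals. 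Since the catenoid grows exponentially and has unbounded slope, the peak $\tilde y^*$ of $\tilde u_{\tilde y}$ must drift to infinity as $C\to 0$, while $\tilde u(\tilde y^*)$ grows fast enough that $\tilde u(\tilde y^*)/\tilde y^*\to\infty$. The rescaling preserves ratios, so $u(y^*)/y^*=\tilde u(\tilde y^*)/\tilde y^*\to\infty$, closing the argument. The main technical work is quantifying when the $C^2$-correction in the rescaled ODE overtakes the catenoid term (which pins down $\tilde y^*$); as a robust alternative, one can imitate the clearing-out contradiction from the previous lemma, applied to a ball placed near the low-slope nose of $M(u)$, to directly rule out $u(y^*)/y^*$ remaining bounded.
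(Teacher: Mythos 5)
Your reduction is sound and matches the paper's: writing the equation as $\frac{u}{y}=u_y+\frac{2u_{yy}}{y(1+u_y^2)}-\frac{2(n-2)}{uy}$, one sees $\bigl(\frac{u}{y}\bigr)_y\geq 0$ past the inflection point $y^*$, so $\tan\alpha\geq u(y^*)/y^*$, and the identity $u_y(y^*)y^*=u(y^*)+\frac{2(n-2)}{u(y^*)}$ is exactly what the paper evaluates at $\hat y_C$. Your $C\to\infty$ case is complete and is in fact a cleaner, direct version of the paper's contradiction argument (the paper assumes $u(\hat y_C)/\hat y_C\leq D$, deduces $u(\hat y_C)\hat y_C\to 0$ from $u_y(\hat y_C)\to\infty$, and contradicts $u\geq C$). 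So far, so good.

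The $C\to 0$ case, however, is a genuine gap, and you have correctly located it yourself: the rescaling $\tilde u(\tilde y)=u(C\tilde y)/C$ converges to the catenoid only on compact $\tilde y$-intervals, while $\tilde y^*\to\infty$, so no amount of compact convergence tells you anything about $\tilde u(\tilde y^*)/\tilde y^*$; the solution departs from the catenoid precisely in the regime where you want to evaluate it. Your ``robust alternative'' is also not yet an argument: a single clearing-out ball of radius comparable to $y^*$ at the nose gives an area ratio controlled only by $(u/y^*)^{n-2}\leq D^{n-2}$, which is bounded but not small, so the lemma does not apply directly. The paper closes this case differently: assuming $u/y\leq D$ on $[\hat y_C,\infty)$, it first extracts the lower bound $u(2\hat y_C)\hat y_C\geq 1/D$ from the equation, then splits into two sub-cases --- if $\hat y_C\to 0$ this contradicts $u(2\hat y_C)\leq 2D\hat y_C$; if $\hat y_C\geq\tilde\varepsilon$ it shows $u\to 0$ uniformly and $u_y\leq D$ on a \emph{fixed} interval $[0,\tilde\varepsilon/2]$, so the area ratio in the fixed ball $B_{\tilde\varepsilon/2}(0,C,0,\ldots,0)$ tends to zero and the clearing out lemma contradicts self-expansion. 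For what it is worth, your rescaling route can be salvaged without ever touching $\tilde y^*$: since $\phi=u_yy-u$ increases from $-C$ on $(0,y^*)$ and then decreases to $0$, the ratio $u/y$ is non-decreasing on $[y_0,\infty)$ for the unique zero $y_0<y^*$ of $\phi$, and $y_0/C$ stays bounded; hence $\tan\alpha\geq\tilde u(\tilde Y)/\tilde Y\to\cosh(\tilde Y)/\tilde Y$ for every \emph{fixed} $\tilde Y$ (for $n=3$), which forces $\tan\alpha\to\infty$ upon letting $\tilde Y\to\infty$. But as written, your proof of the $C\to 0$ half is a plan, not a proof.
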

\begin{proof}
Because of \ref{prop6} and lemma \ref{lem:derblowup} we may assume that for any $C > 0$ there is precisely one zero of $u_{yy}$, $\hat{y}_C > 0$. We have from (\ref{eq:selfexprotcurgraph}) for $y > 0$
\begin{eqnarray}
\nonumber \frac{u}{y} = -\frac{2(n-2)}{uy} + u_y + \frac{2 u_{yy}}{y \lrrb{1 + \lrrb{u_y}^2}} \\
\nonumber \lrrb{\frac{u}{y}}_y = \frac{u_y}{y} - \frac{u}{y^2} = \frac{2}{y^2} 
\lrrb{\frac{n-2}{u}-\frac{u_{yy}}{1+\lrrb{u_y}^2}}.
\end{eqnarray}

This means $\lrrb{\frac{u}{y}}_y > 0$ on $\left [ \hat{y}_C, \infty \right )$ for any $C > 0$. By \ref{prop3} we have $\frac{u}{y} \rightarrow \alpha$ as $y \rightarrow \infty$, so it is enough to show $\frac{u }{y} \rightarrow \infty$ at some point in $\left[ \hat{y}_C, \infty \right)$  if either $C \rightarrow \infty$ or $C \rightarrow 0$.

We treat the case $C \rightarrow \infty$ first. Suppose not, so without loss of generality there exists $D > 0$ such that $\frac{u\lrrb{\hat{y}_C}}{\hat{y}_C} \leq D$ for any $C > 0$ large enough. Hence by the above equation $-\frac{2(n-2)}{u\lrrb{\hat{y}_C}\hat{y}_C} + u_y\lrrb{\hat{y}_C} \leq D$ and by lemma \ref{lem:derblowup} $u_y \lrrb{\hat{y}_C} \rightarrow \infty$ as $C \rightarrow \infty$. Therefore we have $u \lrrb{\hat{y}_C} \hat{y}_C \rightarrow 0$ as $C \rightarrow \infty$. But by \ref{prop2} we have $u \geq C$ on $[0,\infty)$, which
then yields $\frac{u \lrrb{\hat{y}_C}}{\hat{y}_C} \geq C^2$ - a contradiction.

For the case $C \rightarrow 0$ again suppose not. Without loss of generality we have then $\frac{u}{\hat{y}} \leq D$ on $\left[ \hat{y}_C, \infty \right)$ for some $D > 0$ and for any $C > 0$ small enough. 

For such $C > 0$ we get $\frac{u \lrrb{2 \hat{y}_C}}{2 \hat{y}_C} \leq D$ and
$$ u \lrrb{2 \hat{y}_C} = u \lrrb{\hat{y}_C} + \int_{\hat{y}_C}^{2\hat{y}_C} u_{\tilde{y}} d \tilde{y} \geq \hat{y}_C u_y \lrrb{2\hat{y}_C}.$$
Hence $u_y \lrrb{2 \hat{y}_C} \leq 2D$. On the other hand (\ref{eq:selfexprotcurgraph}) written as above in the proof yields
$$ 0 < \frac{u \lrrb{2 \hat{y}_C}}{2\hat{y}_C} \leq - \frac{2(n-2)}{ 2 \hat{y}_C u \lrrb{2\hat{y}_C}} + \frac{u_y \lrrb{2\hat{y}_C}}{2}.$$
Therefore we get $u \lrrb{2 \hat{y}_C} \hat{y}_C \geq \frac{1}{D}$ for $C > 0$ small enough, after possibly adjusting $D > 0$. So it remains to show $\hat{y}_C \rightarrow 0$ as $C \rightarrow 0$ in order to get a contradiction.

Assume that $\hat{y}_C \nrightarrow 0$ as $C \rightarrow 0$, so without loss of generality there exists $\tilde{\varepsilon} > 0$ such that $\hat{y}_C \geq \tilde{\varepsilon}$ for any $C > 0$ small enough. As in the case $C \rightarrow \infty$ we get from $u_y \lrrb{\hat{y}_C} \rightarrow \infty$ that $\hat{y}_C u \lrrb{\hat{y}_C} \rightarrow 0$ as $C \rightarrow 0$. 

In view of the bound on $\frac{u \lrrb{\hat{y}_C}}{\hat{y}_C}$ and \ref{prop2} we see that $u \rightarrow 0$ uniformly on $\lrb{0,\tilde{\varepsilon}}$ as $C \rightarrow 0$ and 
$$ u \lrrb{\hat{y}_C} = u \lrrb{\frac{\hat{y}_C}{2}} + \int_{\frac{\hat{y}_C}{2}}^{\hat{y}_C} u_{\tilde{y}} d \tilde{y} \geq \frac{\hat{y}_C}{2} u_y \lrrb{\frac{\hat{y}_C}{2}}.$$
Therefore $u_y \lrrb{\frac{\hat{y}_C}{2}} \leq D$ and especially $u_y \leq D$ on $\lrb{0,\frac{\tilde{\varepsilon}}{2}}$, after possibly adjusting $D > 0$.

We can now apply Brakke's clearing out lemma (see appendix) for $M(u)$ (again with $u: \set{R} \rightarrow \set{R}$) with respect to the ball $B_{\frac{\tilde{\varepsilon}}{2}} \lrrb{0,C,0,\ldots,0}$ to get a contradiction. Estimating the Hausdorff measure yields, using our bound on $u_y$
$$ \mathcal{H}^{n-1} \lrrb{M(u) \cap B_{\frac{\tilde{\varepsilon}}{2}} \lrrb{0,C,0,\ldots,0}} \leq D(n) \tilde{\varepsilon} \max_{\lrb{0,\frac{\tilde{\varepsilon}}{2}}} u^{n-2}.$$
Since $\max_{\lrb{0,\frac{\tilde{\varepsilon}}{2}}} u^{n-2} \rightarrow 0$ as $C \rightarrow 0$ the clearing out time $t_C$ of $M(u)$ with respect to $B_{\frac{\tilde{\varepsilon}}{8}} \lrrb{0,C,0,\ldots,0}$ therefore satisfies $t_C \leq c \frac{\tilde{\varepsilon}^2}{4}$ for $C > 0$ small enough, where $c > 0$ is a constant. But $M(u)$ moves self-expanding, so $\sqrt{t_C + 1} C \geq C + \frac{\tilde{\varepsilon}}{8}$ which means $t_C \geq \frac{\tilde{\varepsilon}^2}{64 C^2}$ - a contradiction.

\end{proof}

Finally the next lemma is the desired stability result for (\ref{eq:selfexprotcurgraph}). 

\begin{lemma}
The asymptotic angle of solutions of (\ref{eq:selfexprotcurgraph}), $ \alpha = \alpha(C) : \; (0,\infty) \rightarrow \left(0,\frac{\pi}{2} \right)$, is a 
continuous function of the initial condition $ C > 0 $.
\end{lemma}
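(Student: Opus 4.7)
The plan is to argue by contradiction using the auxiliary quantity $Z := u - yu_y$, which reduces the asymptotic-angle continuity to a uniform tail estimate. Suppose $C_k \to C_0 > 0$ but, passing to a subsequence, $\alpha(C_k) \to \alpha^* \neq \alpha_0 := \alpha(C_0)$. By (\ref{prop2}), $u(y;C) \geq C > 0$, so the right-hand side of (\ref{eq:selfexprotcurgraph}) is smooth along every trajectory, and standard ODE continuous dependence gives $u(\cdot;C_k) \to u(\cdot;C_0)$ in $C^2([0,Y])$ for each fixed $Y > 0$; in particular $u(Y;C_k)/Y \to u(Y;C_0)/Y$ as $k \to \infty$.

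To bridge this to the asymptotic angle, rewrite (\ref{eq:selfexprotcurgraph}) as $u_{yy} = (1+u_y^2)(Z/2 + (n-2)/u)$ and note $Z_y = -yu_{yy}$. Starting from $Z(0) = C > 0$, a sign analysis of $Z_y$ yields: if $u_{yy}$ has a (unique, by (\ref{prop6})) zero $\hat{y}_C$, then $Z$ decreases on $[0,\hat{y}_C]$ to $Z(\hat{y}_C) = -2(n-2)/u(\hat{y}_C)$ (value forced by $u_{yy} = 0$) and increases on $[\hat{y}_C,\infty)$ to $\lim_{y\to\infty} Z = 0$ (the limit following from $u_{yy}\to 0$ (\ref{prop6}), $u\to\infty$ (\ref{prop2}), and the identity above); if $u_{yy} > 0$ everywhere, $Z$ decreases monotonically from $C$ to $0$. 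Either way, $|Z(y;C)| \leq \max(C, 2(n-2)/C)$, which for $C$ in a compact neighborhood of $C_0$ is bounded by a single constant $K = K(C_0)$. Since $(u/y)_y = (yu_y - u)/y^2 = -Z/y^2$ and $u/y \to \tan\alpha(C)$ by (\ref{prop3}), integration gives
\begin{equation*}
\tan\alpha(C) - \frac{u(Y;C)}{Y} = -\int_Y^\infty \frac{Z(y;C)}{y^2}\, dy,
\end{equation*}
and hence the uniform tail estimate $|\tan\alpha(C) - u(Y;C)/Y| \leq K/Y$ for all $C$ in the neighborhood and all $Y > 0$.

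Fix $\varepsilon > 0$ and choose $Y$ so large that $K/Y < \varepsilon/3$. By the finite-$Y$ continuous dependence, $u(Y;C_k)/Y \to u(Y;C_0)/Y$ as $k \to \infty$, so for $k$ large $|u(Y;C_k)/Y - u(Y;C_0)/Y| < \varepsilon/3$. The triangle inequality then gives $|\tan\alpha(C_k) - \tan\alpha_0| < \varepsilon$ for $k$ large, contradicting $\alpha^* \neq \alpha_0$. The main obstacle is precisely this tail estimate, since $\lim_{y\to\infty}$ is a priori not a continuous functional on ODE solutions; introducing $Z = u - yu_y$ is what makes it manageable, as $Z$ has a simple sign structure dictated by that of $u_{yy}$, yielding an explicit pointwise bound in terms of $C$ that transfers directly into uniform decay of $u/Y$ to $\tan\alpha(C)$ via the identity $(u/y)_y = -Z/y^2$.
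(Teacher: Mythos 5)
Your proof is correct, and while it shares the paper's overall skeleton --- a uniform-in-$C$ tail estimate for $u(Y;C)/Y - \tan\alpha(C)$ combined with continuous dependence on compact intervals and a triangle inequality --- the way you obtain the crucial uniform estimate is genuinely different and, I think, cleaner. The paper works with the same integrand (note that your $-Z/y^2$ equals the paper's $\tfrac{2}{y^2}\bigl(\tfrac{n-2}{u}-\tfrac{u_{yy}}{1+(u_y)^2}\bigr)$) but bounds it by establishing uniform $C^2$ bounds on the whole family $u_k$: it uses continuous dependence to locate the unique zero of $(u_k)_{yy}$ near that of $(u_0)_{yy}$, and then differentiates the ODE to argue that the local extrema of $u_{yy}$ cannot be arbitrarily large in absolute value. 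You instead exploit the monotonicity structure of $Z=u-yu_y$ (via $Z_y=-yu_{yy}$ and the forced value $Z(\hat y_C)=-2(n-2)/u(\hat y_C)$ at the inflection point) to get the explicit pointwise bound $\abs{Z}\leq\max(C,2(n-2)/C)$, which requires no uniform control of $u_{yy}$ across the family and no case analysis tied to the limiting solution $u_0$. Your endgame is also simpler: a direct triangle inequality at a single large $Y$, versus the paper's decomposition of the difference of the two ODEs into $\xi_1=\xi_2+\xi_3+\xi_4$. Two trivial points to tidy: the fact that $u\to\infty$ (needed for $Z\to 0$) follows from (\ref{prop3}) and (\ref{prop4}) (positivity of the asymptotic angle) rather than from (\ref{prop2}) alone; and in the case where $u_{yy}>0$ everywhere you should note that $Z$, being decreasing with limit $0$, is therefore nonnegative, which is what gives $\abs{Z}\leq C$ there.
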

\begin{proof}
Assume not. Then there exists $ \tilde{\varepsilon} > 0, C_0 > 0 $ and a bounded sequence $ \left(C_k \right)_{k \in \set{N}} $ in $ (0,\infty) $ with 
$ C_k \rightarrow C_0 $ as $ k \rightarrow \infty $ and $ \abs{\tan \alpha_{C_k} - \tan \alpha_{C_0}} \geq \tilde{\varepsilon} $ for all $ k \in \set{N} $. The associated solutions of (\ref{eq:selfexprotcurgraph}) are denoted by 
$ \left(u_k \right)_{k \in \set{N}} $ and $ u_0 $.

From \ref{prop4} we see that $ 0 < \sup_{y \geq 0} \left( u_k \right)_y  \leq D$ for all $ k \in \set{N} $, $D > 0$ a constant. In the following we will adjust the constant $D > 0$ implicitely as necessary. In view of \ref{prop6} we assume that $ \left( u_0 \right)_{yy} $ has precisely one zero $ y_0 $. The case $ \left( u_0 \right)_{yy} > 0 $ on $ [0,\infty) $ can be handled in the same way.

Since we have continuous dependence on the initial conditions on compact intervals we may assume that all $ C_k $ are close enough to $ C_0 $, so that each $ \left(u_k \right)_{yy} $ has precisely one zero close to $ y_0 $. Therefore we get $ \sup_{y \geq 0} \left( u_k \right)_{yy} \leq D$ for all $ k \in \set{N} $. Differentiating (\ref{eq:selfexprotcurgraph}) with respect to $ y > 0 $ yields 
$$\frac{-(n-2)u_y}{u^2} = \frac{yu_{yy}}{2} + \frac{1}{1+\left( u_y \right)^2} \left( u_{y}^{(3)} - \frac{2u_y \left(u_{yy} \right) ^2}{1+\left( u_y \right)^2} \right).$$ 
In view of \ref{prop5} this means $ \left(u_k \right)_{yy} $ can not have arbitrarily small local extrema. Therefore 
$\abs{ \sup_{y \geq 0} \left( u_k \right)_{yy}} \leq D$ for all $ k \in \set{N} $.

Hence we can estimate for $ y > 0 $, using \ref{prop1} and \ref{prop6}
\begin{eqnarray}
\nonumber  \left( \frac{u_k}{y} \right)_y = \frac{1}{y} \left( \left( u_k \right)_y - \frac{u_k}{y} \right) = \\
\nonumber \frac{2}{y^2} \left( \frac{n-2}{u_k} - \frac{\left( u_k \right)_{yy}}{1+\left( u_y \right)^2} \right) \leq \frac{D}{y^2}.
\end{eqnarray}

So by \ref{prop3} for any $ \delta > 0 $ there exists $ y_{\delta} > 0 $ with
$$ \abs{ \frac{u_k}{y} - \tan \alpha_k } < \delta, \; \; \; \; y \geq y_{\delta}, \; k \in \set{N}. $$
Using our assumption on $\alpha{C_0}, \alpha_{C_k}$ we can therefore choose $ \delta > 0 $ such that 
$$ \abs{ \frac{u_k}{y} - \frac{u_0}{y} } \geq \frac{\tilde{\varepsilon}}{2}, \; \; \; \; y \geq y_{\delta}, \; k \in \set{N}.$$

We can write the difference of the ODEs for $ u_0 $ and $ u_k $ at $ y > 0 $ as $ \xi_1 = \xi_2 +  \xi_3 + \xi_4$, where

\begin{eqnarray}
\nonumber \xi_1 = \frac{1}{y} \left( u_0 - u_k \right) \\
\nonumber \xi_2 = \left(u_0 \right)_y - \left(u_k \right)_y \\
\nonumber \xi_3 = \frac{2}{y} \left( \frac{\left( u_0 \right)_{yy}}{1 + \left( u_0 \right)_y^2} - \frac{\left( u_k \right)_{yy}}{1 + \left( u_k \right)_y^2} \right) \\
\nonumber \xi_4 = \frac{2}{y} \left( \frac{n-2}{u_k} - \frac{n-2}{u_0} \right).
\end{eqnarray}

By the previous considerations we can find $\hat{y} > y_\delta$ such that $\xi_3 < \frac{\tilde{\varepsilon}}{6}$ and $\xi_4 < \frac{\tilde{\varepsilon}}{6}$  for any $k \in \set{N}$. By continuous dependence we can find $\hat{k} \in \set{N}$ such that $\xi_2 < \frac{\tilde{\varepsilon}}{6}$. Therefore we get  $\abs{\xi_2 +  \xi_3 + \xi_4} < \frac{\tilde{\varepsilon}}{2}$ - a contradiction.

\end{proof}

Putting the last three lemmas together we arrive at theorem \ref{thm:selfexp}. $\alpha_{crit}^* \in \lrrb{0, \frac{\pi}{2}}$ follows from \ref{prop5} and from the clearing out lemma (see appendix \ref{appendixclearingout} and \cite{angenentilmanen}). As stated in lemma \ref{lem:ilmanen1} the existence of a two-sheeted solution (for any cone angle $\alpha \in \lrrb{0, \frac{\pi}{2}}$) is asserted using results of Ecker and Huisken (see \cite{eckerhuisken}) which ensure existence of self-expanding evolutions of the two Lipschitz graphs $C_\alpha$ and $C_{\pi - \alpha}$.

Additionally standard theory of differential inequalities shows that for $C > \sqrt{2}$ the asymptotic angle $\alpha \lrrb{C}$ is strictly monotone  increasing for solutions of (\ref{eq:selfexprotcurgraph}). We believe it is everywhere strictly monotone, apart from $C_{crit}, \; \alpha \lrrb{C_{crit}} = \alpha_{crit}^*$. This would imply that for each $\alpha_{crit}^* < \alpha < \frac{\pi}{2}$ there are precisely two self-expanding. smooth evolutions of $D_\alpha$ of the form $M(u)$. In general there might be more self-expanding evolutions of $D_\alpha$, possibly also non-rotationally symmetric (see \cite{angenentilmanen}).

\begin{remark}
Solutions of (\ref{eq:selfexp1}) are stationary for the functional
$$ \mathbf{K}[M] = \int_M \exp \left( \dfrac{\abs{x}^2}{4} \right) d\mathcal{H}^{n-1}(x).$$
The authors of \cite{angenentilmanen} sketch a proof for the existence of one-sheeted self-expanders, asymptotic to $ D_\alpha $ which are minimizers of $ \mathbf{K} $. Furthermore they indicate how one might prove a version of theorem \ref{thm:selfexp} using this approach.
\end{remark}

\section{Touching fluid droplets}

As mentioned before we want to apply the previous results to study the behaviour of touching fluid droplets. These are assumed to have locally conical, 
rotationally symmetric shape, i.e. the shape of a smoothing of $ D_\alpha $. The following definition makes this formulation precise. From now on we set 
the dimension to $n=3$.

\begin{definition}
\label{def:conesmoothing}
We call $M_{\alpha}=M\left(u_\alpha \right)$
a smoothing of the double cone $ D_\alpha $ with angle $0<\alpha<\frac{\pi}{2}$,
$\gamma = \tan\alpha$, if for $a<0<b$
$$
u_\alpha\left(y \right)=\begin{cases}
\gamma \abs{y} \text{ if } y \leq a \text{ or }b\leq y\\
s \left( y \right) \textrm{ if }a\leq y \leq b\end{cases}
$$
such that $s>0$ and $u_\alpha \in C^{2,\beta}\left(\set{R}\right), \; \beta > 0$. $M_{\alpha}$ is said to lie outside of the double
cone $D_\alpha$ if $s \left( y \right) \geq\gamma \abs{y}$
for $a<y<b$. $ u $ is the generating function of $ M_\alpha $.
\end{definition}

Clearly any smoothing of a double cone stays rotationally symmetric under mean curvature flow. One can compute (see \cite{msimon}) that (\ref{eq:MCF}) 
for a mean curvature flow evolution $M_t$ of $M_\alpha $ is equivalent to

\begin{equation}
\label{eq:mcfgenfunc}
\frac{\partial}{\partial t}u=\frac{\frac{\partial^{2}}{\partial^{2}y}u}{1+\left(\frac{\partial}{\partial y}u\right)^{2}}-\frac{1}{u}
\end{equation}

where $u=u(\cdot,t), \; u(\cdot,0)=u_\alpha$ generates $M_{t}$.

For any smoothing $M_\alpha$ we have short-time existence of a solution $M_t$ of (\ref{eq:MCF}) on a maximal time interval $[0,T), \; T > 0$. Furthermore 
the solution must be smooth for $t > 0$ and every finite time singularity must be due to pinching, i.e. $\inf_{\set{R}} u(\cdot, t) \rightarrow 0$ as 
$t \rightarrow T$. This holds even without any growth assumption on the initial generating function (see \cite{msimon} and \cite{ladyzhenskayaparabolicpde}). 

In fact a sphere comparison argument (see \cite{eckermcf}) shows that
$$\lim_{t \rightarrow T} \min_{\set{R}} u(\cdot, t) \rightarrow 0 \text{ as } t \rightarrow T$$ 
must hold for finite time singularities. 

This agrees with intuition about repulsion (pinching in finite time) and coalescence (long-time existence) of fluid droplets.

Using techniques by Ecker and Huisken (see \cite{eckerhuisken}) one can derive global height estimates for rotationally symmetric solutions of the 
mean curvature flow which yield, using the results from \cite{barlesuniqueness}, the following comparison principle (see \cite{bodethesis}).

\begin{lemma}
\label{lem:compprinc}
Let $M_{\alpha_1}$, $M_{\alpha_2}$ be two smoothings of the double cone with $u_{\alpha_1}\leq u_{\alpha_2}$ for the associated
generating functions. Denoting the generating functions of the two evolutions with $u^1$ and $u^2$ we have then $u^1 \leq u^2$ as long as the solutions 
exist.
\end{lemma}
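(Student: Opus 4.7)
The plan is to pull the desired inequality back from a comparison principle for the level-set formulation of MCF on all of $\set{R}^3$, as hinted by the citation of Barles' uniqueness theorem. Although the generating function $u$ satisfies the singular PDE (\ref{eq:mcfgenfunc}) on the unbounded domain $\set{R}$, the corresponding surfaces $M(u^1(\cdot,t))$ and $M(u^2(\cdot,t))$ evolve as zero level sets of viscosity solutions of the level-set MCF equation in $\set{R}^3$, for which a robust comparison theory is available in the unbounded setting.

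First I would package the initial ordering into level-set data: choose continuous, rotationally symmetric functions $v_0^1, v_0^2 : \set{R}^3 \to \set{R}$ whose zero level sets are $M_{\alpha_1}$ and $M_{\alpha_2}$ respectively, which are positive in the inner region bounded by the corresponding surface, and which have compatible linear growth at infinity. Since $u_{\alpha_1} \leq u_{\alpha_2}$ and the two smoothings share the same asymptotic cone angle, these functions can be arranged so that $v_0^1 \leq v_0^2$ on $\set{R}^3$.

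The second step is to invoke Ecker--Huisken type height and gradient estimates for rotationally symmetric solutions of (\ref{eq:mcfgenfunc}). These estimates, valid on compact subintervals of the maximal existence interval $[0,T)$, give global control on $u^i$ and $u^i_y$ and in particular on the asymptotic linear behaviour as $|y|\to\infty$. This control is precisely what is needed to verify the growth hypotheses in Barles' uniqueness/comparison theorem for viscosity solutions of the level-set MCF equation. Barles' theorem then yields $v^1(\cdot,t) \leq v^2(\cdot,t)$ on $\set{R}^3$ for all $t \in [0,T)$; reading off the zero level sets and using rotational symmetry and the fact that the smooth flow of each $M_{\alpha_i}$ coincides with the zero level set of the corresponding viscosity solution, one concludes $u^1 \leq u^2$ pointwise.

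The principal obstacle is the verification step: the Ecker--Huisken style estimates have to be pushed through in an asymptotically conical, unbounded setting, and the growth of $v_0^i$ at infinity must be chosen to fit Barles' framework exactly. A purely PDE alternative is to apply the parabolic maximum principle directly to $w = u^2 - u^1$, which satisfies a linear equation $w_t = a w_{yy} + b w_y + c w$ on $\set{R} \times [0,T)$ with $a > 0$ and $c = 1/(u^1 u^2) > 0$; but this route requires the same kind of global a priori estimates to control the unbounded spatial domain and to cope with the asymptotic cone structure, so little is gained by avoiding the level-set detour.
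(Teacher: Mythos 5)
Your proposal matches the paper's approach: the paper gives no proof of this lemma at all, only the one-sentence attribution that Ecker--Huisken type global height estimates for rotationally symmetric flows (\cite{eckerhuisken}) combined with the viscosity-solution comparison results of \cite{barlesuniqueness} yield the statement, deferring details to \cite{bodethesis} --- which is exactly the recipe you describe. The only cosmetic difference is that you route the argument through the level-set equation in $\set{R}^3$, whereas the cited references apply the Barles--Biton--Bourgoing--Ley comparison directly to the quasilinear graph equation (\ref{eq:mcfgenfunc}) for the generating function on the unbounded domain $\set{R}$ (your stated ``purely PDE alternative''); the latter avoids having to justify separately that the smooth non-compact flow coincides with the zero level set of the viscosity solution, which in the level-set packaging is itself a nontrivial step of the same difficulty. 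One small inaccuracy: the two smoothings need not share the same cone angle (only $u_{\alpha_1}\leq u_{\alpha_2}$, hence $\alpha_1\leq\alpha_2$, is assumed), but this does not affect the construction of ordered initial data.
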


\begin{corollary}
Any smoothing of the double cone $M_\alpha, \; 0 < \alpha < \frac{\pi}{2}$ has a unique evolution by mean curvature.
\end{corollary}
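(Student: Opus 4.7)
The corollary should fall out directly from the comparison principle in Lemma~\ref{lem:compprinc}. My plan is to let $M^1_t$ and $M^2_t$ be two mean curvature flow evolutions of the same smoothing $M_\alpha$, defined on maximal time intervals $[0,T_1)$ and $[0,T_2)$, with associated generating functions $u^1$ and $u^2$. I then apply Lemma~\ref{lem:compprinc} in the degenerate case $M_{\alpha_1} = M_{\alpha_2} = M_\alpha$, so that $u_{\alpha_1} = u_{\alpha_2} = u_\alpha$ and the hypothesis $u_{\alpha_1} \leq u_{\alpha_2}$ holds trivially. The lemma yields $u^1 \leq u^2$ on the overlap of the existence intervals, and swapping the roles of the two evolutions gives $u^2 \leq u^1$ on the same overlap. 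Hence $u^1 = u^2$ wherever both are defined.

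It remains to argue that the maximal existence times coincide, i.e.\ $T_1 = T_2$. Suppose without loss of generality $T_1 \leq T_2$. Since $u^1 = u^2$ on $[0,T_1)$, the pinching characterization of finite-time singularities (from the discussion after \eqref{eq:mcfgenfunc}, together with the sphere comparison argument) forces $\inf_{\set{R}} u^2(\cdot,t) = \inf_{\set{R}} u^1(\cdot,t) \to 0$ as $t \to T_1$ whenever $T_1 < \infty$, so $T_2 \leq T_1$, and hence $T_1 = T_2$. If $T_1 = \infty$ then by $u^1 = u^2$ also $T_2 = \infty$. In either case the two evolutions are identical throughout their maximal intervals of existence.

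I expect the only subtle point to be the invocation of Lemma~\ref{lem:compprinc} with $M_{\alpha_1} = M_{\alpha_2}$; if one wanted to avoid this degenerate use, one could instead approximate $u_\alpha$ from above and below by generating functions of smoothings $M_{\alpha}^{\pm\varepsilon}$ with $u_\alpha - \varepsilon \varphi \leq u_\alpha \leq u_\alpha + \varepsilon \varphi$ for some positive bump $\varphi$, apply the comparison principle to sandwich both $u^1$ and $u^2$ between the two evolutions, and then pass $\varepsilon \to 0$ using continuous dependence on initial data for the quasilinear parabolic equation \eqref{eq:mcfgenfunc}. But since Lemma~\ref{lem:compprinc} as stated permits equality in the hypothesis, the direct argument above is the cleanest.
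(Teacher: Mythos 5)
Your argument is exactly the intended one: the paper offers no explicit proof and treats the corollary as an immediate consequence of Lemma~\ref{lem:compprinc} applied with $M_{\alpha_1}=M_{\alpha_2}=M_\alpha$, giving $u^1\leq u^2$ and $u^2\leq u^1$ by swapping roles. Your extra care about matching the maximal existence times via the pinching characterization is correct and only strengthens the (implicit) argument.
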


Using these results we can now define what coalescence and repulsion mean within our model.

\begin{definition}
\label{def:repangle} 
An angle $0<\alpha<\frac{\pi}{2}$ is called a repulsion angle if there exists a smoothing of the double cone
$M_\alpha$ which is outside of the double cone and such that the mean curvature flow evolution of $M_\alpha$ pinches in finite time.
\end{definition}

\begin{lemma}
\label{lem:nonrep-coal}
An angle $0<\alpha<\frac{\pi}{2}$ is not a repulsion angle if and only if there is a smoothing of the cone $M_\alpha$, for which the evolution under 
mean curvature flow exists for all $t > 0$.
\end{lemma}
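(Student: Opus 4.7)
The plan is to prove the two directions of the biconditional separately. The ``only if'' direction is essentially tautological; the ``if'' direction is the substantive content and hinges on a scale-and-compare argument using lemma \ref{lem:compprinc}.

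For ``only if'', I would first observe that smoothings of $D_\alpha$ lying outside the cone do exist, by a direct construction: for any choice of transition endpoints $a<0<b$, one can paste a sufficiently tall $C^{2,\beta}$ bridge above $\gamma|y|$ matched to the cone data $s(b)=\gamma b$, $s'(b)=\gamma$, $s''(b)=0$ and analogously at $a$, keeping $s(y)>\gamma|y|$ on $(a,b)$. If $\alpha$ is not a repulsion angle, definition \ref{def:repangle} says no such smoothing pinches in finite time, and since every finite-time singularity of this rotationally symmetric flow is caused by pinching (as recalled before lemma \ref{lem:compprinc}), the evolution exists for all $t>0$.

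For ``if'', assume there is a smoothing $M_\alpha$ whose evolution $v(\cdot,t)$ exists for all $t>0$, and let $M_\alpha'$ be an arbitrary smoothing outside $D_\alpha$, with initial generating function $u_\alpha'$ and transition interval $[a',b']$. The goal is to sandwich the evolution of $M_\alpha'$ from below by a rescaled copy of $v$. I would exploit the parabolic scale invariance of mean curvature flow together with the scale invariance of the cone: for every $\lambda>0$, the surface $\lambda M_\alpha$ is again a smoothing of $D_\alpha$ with evolution $\lambda v(\cdot/\lambda, t/\lambda^2)$, which likewise exists for all $t>0$. Then I would pick $\lambda>0$ so small that the rescaled initial datum $v_\lambda(y):=\lambda u_\alpha(y/\lambda)$ satisfies $v_\lambda\le u_\alpha'$ pointwise on $\mathbb{R}$. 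Outside the rescaled transition interval $[\lambda a,\lambda b]$ this inequality is automatic, because $v_\lambda(y)=\gamma|y|\le u_\alpha'(y)$ exactly by the assumption that $M_\alpha'$ lies outside the cone. Inside $[\lambda a,\lambda b]$ (which is contained in $[a',b']$ for $\lambda$ small) one has $v_\lambda\le\lambda\sup u_\alpha$, and this can be made smaller than the positive constant $\min_{[a',b']}u_\alpha'$ by further shrinking $\lambda$.

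With the initial ordering $v_\lambda\le u_\alpha'$ in hand, lemma \ref{lem:compprinc} propagates it to $v_\lambda(\cdot,t)\le u'(\cdot,t)$ so long as the evolution $u'(\cdot,t)$ of $M_\alpha'$ exists. The rescaled long-time evolution of $\lambda M_\alpha$ is smooth, positive, and asymptotic to the cone, so its minimum is attained and is bounded below by a positive constant on every compact time interval. Hence $\min_{\mathbb{R}}u'(\cdot,t)$ cannot tend to zero in finite time, and the pinching-only characterization of finite-time singularities forces the evolution of $M_\alpha'$ to exist for all $t>0$. Since $M_\alpha'$ was an arbitrary smoothing outside $D_\alpha$, $\alpha$ is not a repulsion angle. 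The step I expect to require the most care is the initial comparison $v_\lambda\le u_\alpha'$: the hypothesis ``outside the cone'' is decisively used there, and without it a local dip of $u_\alpha'$ below $\gamma|y|$ could undercut every small rescaling of $u_\alpha$ near the transition.
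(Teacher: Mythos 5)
Your proof is correct and takes essentially the same route as the paper: the ``only if'' direction is treated as definitional (modulo the pinching characterization of finite-time singularities), and the ``if'' direction is exactly the paper's argument of parabolically rescaling the globally existing smoothing, using the scale invariance of $D_\alpha$ and the hypothesis that the second smoothing lies outside the cone to get the initial ordering, and then invoking lemma \ref{lem:compprinc}. You simply spell out the verification of $v_\lambda \le u_\alpha'$ and the no-pinching conclusion in more detail than the paper does.
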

\begin{proof}
By definition any angle $0<\alpha<\frac{\pi}{2}$, that is not a repulsion angle, must have a smoothing $M_\alpha$ for which the evolution under mean 
curvature flow $M_t$ exists for all $t > 0$. 

So suppose for $0<\alpha<\frac{\pi}{2}$ there exists a smoothing $M_\alpha$, such that the evolution $M_t$ exists for all $t > 0$. Let $\hat{M}_\alpha$ be 
another smoothing, that is outside of the double cone. We denote its evolution by $\hat{M}_t, \; t \in [0,T)$.

We know that the mean curvature flow is invariant under parabolic rescaling
$$x\mapsto\lambda x, \; t\mapsto\lambda^{2}t $$
for any scaling parameter $\lambda>0$, $x \in M_{t}$ and $t\in I$. Let $M_{t}^{\lambda}$ be the rescaling of $M_{t}$. Note here that any double 
cone $D_\alpha$ is invariant under the scaling $x \mapsto\lambda x$.
Since $\hat{M}_\alpha$ is outside of the double cone we can therefore choose $\lambda>0$ sufficiently small in order to get initially
$$u^{\lambda}_\alpha\leq \hat{u}_\alpha $$
for the corresponding generating functions.
By lemma \ref{lem:compprinc} $\hat{M}_t$ must exist for all $t > 0$, therefore $\alpha$ is not a repulsion angle.
\end{proof}

In view of the last lemma we make the following definition.
\begin{definition}
\label{def:coalangle}
An angle $0<\alpha<\frac{\pi}{2}$ is called a coalescence angle if it is not a repulsion angle in the sense of definition \ref{def:repangle}.
\end{definition}

Given a field strength (respectively a cone angle) at which coalescence occurs any lower field strength (respectively greater cone angle)
should lead to coalescence as well. The same should hold for repulsion angles with higher field strength. The next two lemmas shows that this
is true within our model.

\begin{lemma}
\label{lem:justcoalangle}
Let $\alpha_{0}$ be a coalescence angle. Then for any angle $\alpha \geq \alpha_{0}$ and for any smoothing
$M_\alpha$ that is outside of the double cone the mean curvature flow with initial data $M_\alpha$ exists for all $t > 0$.
Therefore any angle $\alpha > \alpha_{0}$ is a coalescence angle.
\end{lemma}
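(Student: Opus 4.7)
The plan is to reduce the claim to the comparison principle (Lemma \ref{lem:compprinc}) by trapping the evolution of $M_\alpha$ from below by a parabolic rescaling of a smoothing of $D_{\alpha_0}$ that is already known to have long-time existence. Concretely, fix $\alpha \geq \alpha_0$ and a smoothing $M_\alpha$ of $D_\alpha$ that lies outside $D_\alpha$. Since $\alpha_0$ is a coalescence angle, Lemma \ref{lem:nonrep-coal} supplies some smoothing $M_{\alpha_0}$ of $D_{\alpha_0}$ whose mean curvature evolution $M_t^{(0)}$ exists for all $t > 0$. Because the mean curvature flow is invariant under parabolic rescaling $x \mapsto \lambda x$, $t \mapsto \lambda^2 t$, the rescaled initial surface $M_{\alpha_0}^\lambda$ also has long-time existence, and (crucially) the double cone $D_{\alpha_0}$ itself is fixed by the spatial rescaling.

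The core estimate is then $u_{\alpha_0}^\lambda \leq u_\alpha$ for all $\lambda>0$ small enough. This is where the hypothesis $\alpha \geq \alpha_0$ and the hypothesis that $M_\alpha$ lies outside $D_\alpha$ enter: together they give $u_\alpha(y) \geq \tan \alpha \cdot |y| \geq \tan \alpha_0 \cdot |y|$ for every $y \in \set{R}$. Split $\set{R}$ at the endpoints of the smoothing interval of $u_{\alpha_0}^\lambda$, which for $\lambda$ small is contained in the smoothing interval $[a,b]$ of $u_\alpha$. Off this shrunken interval one has $u_{\alpha_0}^\lambda(y) = \tan\alpha_0 \cdot |y|$, so the bound is immediate from the previous display. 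On the shrunken interval one has the crude bound $u_{\alpha_0}^\lambda(y) \leq \lambda \max u_{\alpha_0}$, while $u_\alpha$ is strictly positive on the compact interval $[a,b]$ and thus bounded below by some positive constant there; so taking $\lambda$ smaller still gives the comparison.

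With $u_{\alpha_0}^\lambda \leq u_\alpha$ in hand, Lemma \ref{lem:compprinc} propagates this ordering to the respective evolutions $u^1$ and $u^2$ so long as both exist. But $u^1$ exists and stays strictly positive for all $t > 0$ by the rescaling argument of the first paragraph, and the only obstruction to long-time existence is pinching $\inf_{\set{R}} u^2(\cdot,t) \to 0$. If $u^2$ pinched at some finite $T$, then $u^1 \leq u^2$ would force $u^1$ to vanish at $T$ as well, contradicting the long-time smoothness of $M_t^{(0)}$. Hence $M_\alpha$ has long-time mean curvature flow, which by Definition \ref{def:repangle} means $\alpha$ is not a repulsion angle, and so a coalescence angle by Definition \ref{def:coalangle}.

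The main obstacle is not the comparison principle itself but establishing $u_{\alpha_0}^\lambda \leq u_\alpha$ in a way that uses both hypotheses cleanly: the inequality $\tan\alpha \geq \tan\alpha_0$ handles the conical tails, while the outside-the-cone hypothesis on $M_\alpha$ is what keeps $u_\alpha$ above the asymptotic cone $\tan\alpha_0 \cdot |y|$ inside the smoothing region, so that the scale parameter $\lambda$ only has to absorb the bounded bump coming from $M_{\alpha_0}$, not any dip of $u_\alpha$ below its asymptotic cone. This is exactly parallel to the rescaling step in the proof of Lemma \ref{lem:nonrep-coal}, which suggests that both hypotheses on $M_\alpha$ are genuinely needed and cannot be weakened within this barrier-based approach.
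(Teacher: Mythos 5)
Your proposal is correct and follows exactly the route the paper intends: the paper's own proof is a one-line reference to the parabolic rescaling step of Lemma \ref{lem:nonrep-coal} combined with the comparison principle of Lemma \ref{lem:compprinc}, and you have simply written out that same argument in full detail (including the correct use of the outside-the-cone hypothesis and $\tan\alpha \geq \tan\alpha_0$ to get the barrier inequality $u_{\alpha_0}^\lambda \leq u_\alpha$).
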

\begin{proof}
As in the proof of lemma \ref{lem:nonrep-coal} we can scale appropriately and then use the comparison principle from lemma \ref{lem:compprinc}.
\end{proof}

\begin{lemma}
\label{lem:justrepangle}
Let $\alpha_{0}$ be a repulsion angle. Then for $\alpha\leq\alpha_{0}$ any smoothing of the double cone $M_\alpha$
with angle $\alpha$ must pinch in finite time. This means any angle $\alpha<\alpha_{0}$ must be a repulsion angle.
\end{lemma}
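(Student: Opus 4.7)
The plan is to exploit the scale-invariance of the double cone $D_{\alpha_0}$ together with the comparison principle (Lemma~\ref{lem:compprinc}) to sandwich any smoothing $M_\alpha$ with $\alpha \leq \alpha_0$ beneath a parabolically rescaled copy of the pinching smoothing supplied by the repulsion hypothesis.

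First I would extract from Definition~\ref{def:repangle} a smoothing $\hat M_{\alpha_0}$ of $D_{\alpha_0}$ lying outside of $D_{\alpha_0}$ whose mean curvature flow evolution pinches at some finite time $T > 0$. Its generating function $\hat u_{\alpha_0}$ is $C^{2,\beta}$, equals $y \mapsto \tan(\alpha_0) |y|$ outside a compact interval $[a_0,b_0]$, everywhere dominates $\tan(\alpha_0)|y|$, and attains a strictly positive minimum. Parabolic rescaling with parameter $\lambda > 0$ produces a new smoothing $\hat M_{\alpha_0}^\lambda$ of the same double cone $D_{\alpha_0}$ (which is invariant under $x \mapsto \lambda x$), still lying outside of $D_{\alpha_0}$, with generating function $\hat u_{\alpha_0}^\lambda(y) = \lambda \hat u_{\alpha_0}(y/\lambda)$ and finite pinching time $\lambda^2 T$.

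Next, for an arbitrary smoothing $M_\alpha$ with generating function $u_\alpha$ supported (as a perturbation) on $[a,b]$ and $\alpha \leq \alpha_0$, I would choose $\lambda$ large enough that $u_\alpha \leq \hat u_{\alpha_0}^\lambda$ pointwise on $\set{R}$. Outside a sufficiently large bounded interval both functions are linear and the inequality $\tan(\alpha)|y| \leq \tan(\alpha_0)|y|$ holds because $\alpha \leq \alpha_0 < \pi/2$; on the bounded region $u_\alpha$ is bounded by some constant $M$ depending only on $u_\alpha$, whereas $\hat u_{\alpha_0}^\lambda \geq \lambda \min_{\set{R}} \hat u_{\alpha_0}$, which exceeds $M$ once $\lambda$ is large. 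Lemma~\ref{lem:compprinc} then propagates the ordering along the flow, so if $u(\cdot,t)$ denotes the evolving generating function of $M_\alpha$, one has $u(\cdot,t) \leq \hat u_{\alpha_0}^\lambda(\cdot,t)$ on the common interval of existence; since the right-hand side pinches at time $\lambda^2 T$, the infimum of $u(\cdot,t)$ must reach $0$ no later than $\lambda^2 T$, i.e.\ $M_\alpha$ pinches in finite time.

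This single argument covers both the case $\alpha < \alpha_0$ (the conclusion of the lemma) and the borderline $\alpha = \alpha_0$ (the strengthening that every smoothing at the given angle pinches, not merely the distinguished one from Definition~\ref{def:repangle}). The only genuine bookkeeping lies in verifying $u_\alpha \leq \hat u_{\alpha_0}^\lambda$ on the intermediate region $[a,b] \cup [\lambda a_0, \lambda b_0]$ where $\hat u_{\alpha_0}^\lambda$ is no longer exactly linear, but this follows immediately from the positive-minimum bound, so I do not foresee any analytic obstacle beyond arranging the constants. The argument is dual to the scaling-plus-comparison trick already employed in Lemma~\ref{lem:nonrep-coal} and Lemma~\ref{lem:justcoalangle}, merely run with the inequalities reversed.
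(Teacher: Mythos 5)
Your proposal is correct and is exactly the paper's argument: the paper's proof is the single sentence ``scale appropriately and use Lemma~\ref{lem:compprinc}'', and you have simply run the scaling-plus-comparison trick of Lemma~\ref{lem:nonrep-coal} with the barrier placed above rather than below. One detail in your verification of $u_\alpha \leq \hat u_{\alpha_0}^{\lambda}$ is misattributed: on the intermediate region $[\lambda a_0,\lambda b_0]\setminus[a,b]$ the function $u_\alpha(y)=\tan\alpha\,\abs{y}$ can be as large as $\tan\alpha\cdot\lambda\max(\abs{a_0},b_0)$, so the positive-minimum bound $\hat u_{\alpha_0}^{\lambda}\geq \lambda\min_{\set{R}}\hat u_{\alpha_0}$ does not close it; what does is the outside-ness you already recorded in your setup, namely $\hat u_{\alpha_0}^{\lambda}(y)=\lambda\,\hat u_{\alpha_0}(y/\lambda)\geq\tan(\alpha_0)\abs{y}\geq\tan(\alpha)\abs{y}$ for $\abs{y}\geq\max(\abs{a},b)$, with the minimum bound needed only on the fixed compact set $\abs{y}\leq\max(\abs{a},b)$.
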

\begin{proof}
Again, as for lemma \ref{lem:nonrep-coal} we can scale appropriately and then use lemma \ref{lem:compprinc}.
\end{proof}

The observations from the experiments in \cite{naturephys} suggest the existence of a critical angle for the
behaviour of the system. As the next lemma shows, this is also true for our model.

\begin{lemma}
There is a critical angle $0 \leq\alpha_{crit}\leq \frac{\pi}{2}$ such that any angle $\alpha<\alpha_{crit}$ is a repulsion angle and any angle 
$\alpha>\alpha_{crit}$ is a coalescence angle.
\end{lemma}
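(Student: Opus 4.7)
The plan is simply to extract $\alpha_{\text{crit}}$ as the cutoff between two monotone regions of $(0,\pi/2)$, which the preceding lemmas have already shown to exist. The key observation is that by definition \ref{def:coalangle} every angle $\alpha \in (0,\pi/2)$ is either a repulsion angle or a coalescence angle (these two classes partition the interval). Moreover, lemmas \ref{lem:justcoalangle} and \ref{lem:justrepangle} tell us that the set $R$ of repulsion angles is downward closed and the set $C$ of coalescence angles is upward closed in $(0,\pi/2)$.

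Given this, I would define
$$\alpha_{\text{crit}} = \sup R,$$
with the convention that $\sup \emptyset = 0$ and $\sup (0,\pi/2) = \pi/2$, so that $0 \leq \alpha_{\text{crit}} \leq \pi/2$. Then the two required properties follow directly. If $\alpha < \alpha_{\text{crit}}$, then by definition of supremum there is some $\alpha_0 \in R$ with $\alpha < \alpha_0 \leq \alpha_{\text{crit}}$; then lemma \ref{lem:justrepangle} applied to the repulsion angle $\alpha_0$ shows that $\alpha$ is itself a repulsion angle. Conversely, if $\alpha > \alpha_{\text{crit}}$, then $\alpha \notin R$, so $\alpha \in C$ by the dichotomy, and $\alpha$ is a coalescence angle (one may equivalently argue that some $\alpha_0 \in C$ satisfies $\alpha_{\text{crit}} < \alpha_0 < \alpha$, and then invoke lemma \ref{lem:justcoalangle}).

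There is essentially no obstacle here: the substance of the result is already contained in the monotonicity lemmas \ref{lem:justcoalangle} and \ref{lem:justrepangle}, together with the fact that $R$ and $C$ partition $(0,\pi/2)$. The present lemma is just the formal observation that any partition of an interval into a downward closed and an upward closed piece is determined by a single threshold value, and its statement says nothing about what happens at $\alpha = \alpha_{\text{crit}}$ itself, so no further analysis at the borderline is needed.
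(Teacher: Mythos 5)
Your proposal is correct and follows essentially the same route as the paper: the paper sets $\overline{\alpha}=\sup$ of the repulsion angles and $\underline{\alpha}=\inf$ of the coalescence angles, uses lemma \ref{lem:justrepangle} to conclude $\overline{\alpha}\leq\underline{\alpha}$, and then invokes the dichotomy to identify the two, which is the same threshold argument you give with $\alpha_{\text{crit}}=\sup R$. No gap.
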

\begin{proof}
Let 

\begin{eqnarray}
\nonumber \overline{\alpha}=\sup \lrbrace{0 <\alpha < \frac{\pi}{2}:\:\alpha\text{ is a repulsion angle}} \\
\nonumber \underline{\alpha}=\inf \lrbrace{ 0<\alpha< \frac{\pi}{2}:\:\alpha\text{ is a coalescence angle} }.
\end{eqnarray}
According to lemma \ref{lem:justrepangle} any cone angle $\alpha<\overline{\alpha}$
is not a coalescence angle. This shows that $\overline{\alpha}\leq\underline{\alpha}$.
Clearly any angle is either a repulsion or a coalescence angle, hence we have
$\overline{\alpha}=\underline{\alpha}=\alpha_{crit}$.
\end{proof}

Using the constructed one-sheeted self-expanders we can now determine $\alpha_{crit}$.

\subsection{Determining $\alpha_{crit}$}

\begin{lemma}
 \label{lem:upboundcrit}
Any double cone smoothing with angle $\alpha > \alpha_{crit}^*(3)$ that lies outside of the double cone has an evolution which exists for all $t > 0$.
\end{lemma}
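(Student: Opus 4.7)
The plan is to construct a positive uniform lower barrier for the generating function of the evolution of $M_\alpha$, using a suitably rescaled one-sheeted self-expander of a slightly narrower double cone. Once such a barrier is in place, the comparison principle keeps the infimum of the evolving generating function above a positive constant, which precludes pinching and hence forces long-time existence via the characterization of finite-time singularities recalled after (\ref{eq:mcfgenfunc}).

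Since $\alpha > \alpha_{crit}^*(3)$ strictly, I would first pick an auxiliary angle $\alpha' \in \lrrb{\alpha_{crit}^*(3),\alpha}$. By Theorem \ref{thm:selfexp} applied at $\alpha'$ (with $n=3$) there is a smooth, rotationally symmetric, one-sheeted self-expander $\Sigma$ of $D_{\alpha'}$ with even, positive, smooth generating function $u_1$. Property \ref{prop3} yields $u_1(y) \geq \tan\alpha' \cdot |y|$ on $\set{R}$, and property \ref{prop5} together with a short asymptotic analysis of (\ref{eq:selfexprotcurgraph}) (of the type $u_1(y) = \tan\alpha'|y| + O(1/|y|)$ as $|y| \to \infty$) gives $G := \sup_{y \in \set{R}} \lrrb{u_1(y) - \tan\alpha'|y|} < \infty$. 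Parabolic rescaling then produces, for any $t_0 > 0$, the MCF evolution $v(y,t) := \sqrt{t+t_0}\,u_1\lrrb{y/\sqrt{t+t_0}}$ of $\sqrt{t_0}\,\Sigma$, defined for all $t \geq 0$, satisfying the uniform positivity $v(y,t) \geq \sqrt{t+t_0}\, u_1(0) \geq \sqrt{t_0}\, u_1(0) > 0$ and the pointwise bound $v(y,0) \leq \tan\alpha'|y| + \sqrt{t_0}\, G$.

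Second, I would choose $t_0 > 0$ small enough to guarantee $v(\cdot,0) \leq u_\alpha$ pointwise on $\set{R}$. Setting $m := \min_{|y|\leq b} \lrrb{u_\alpha(y) - \tan\alpha'|y|}$, which is strictly positive because $u_\alpha(0) > 0$ and $u_\alpha(y) \geq \tan\alpha|y| > \tan\alpha'|y|$ for $y \neq 0$, the required initial ordering reduces to the explicit condition $\sqrt{t_0}\, G \leq \min\lrbrace{m,\, (\tan\alpha - \tan\alpha')\,b}$, which is clearly achievable. Applying the comparison principle of Lemma \ref{lem:compprinc} (or the variant discussed below) then yields $u^\alpha(y,t) \geq v(y,t) \geq \sqrt{t_0}\, u_1(0) > 0$ throughout the maximal existence interval of $u^\alpha$, and this uniform positive lower bound precludes pinching, so $u^\alpha$ exists for all $t > 0$.

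The hardest step will be the invocation of the comparison principle, because Lemma \ref{lem:compprinc} is formulated for two smoothings of a double cone in the narrow sense of Definition \ref{def:conesmoothing}, whereas $v(\cdot,0)$ is only asymptotic to $\tan\alpha'|\cdot|$ at infinity and not literally equal to it outside a compact set. I expect to handle this either by approximating $v(\cdot,0)$ from below by a monotone sequence of genuine $D_{\alpha'}$-smoothings and passing to the limit via Lemma \ref{lem:compprinc}, or by appealing directly to the viscosity-solution uniqueness of \cite{barlesuniqueness} together with the Ecker--Huisken height estimates that underlie the proof of Lemma \ref{lem:compprinc} in \cite{bodethesis}.
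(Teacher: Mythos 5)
Your proposal is correct and follows essentially the same route as the paper: take a one-sheeted self-expander (the paper uses the one at $\alpha_{crit}^*(3)$ itself, you use one at an intermediate angle $\alpha'$), parabolically rescale it below $u_\alpha$, and invoke Lemma \ref{lem:compprinc} to exclude pinching and hence obtain long-time existence; your explicit treatment of the initial ordering and of the fact that the barrier is not literally a smoothing in the sense of Definition \ref{def:conesmoothing} is more careful than the paper's one-line appeal to the rescaling of Lemma \ref{lem:nonrep-coal}. One small caveat: the side claim $u_1(y)\geq\tan\alpha'\,\abs{y}$ does not follow from \ref{prop3} (the profile can dip below its asymptotic cone when $\alpha(y)$ has an interior minimum), but your argument never actually uses it.
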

\begin{proof}
Let $M_\alpha$ be a smoothing of $D_\alpha$ with $\alpha >  \alpha_{crit}^*(3)$ and generating function $u_\alpha$. Let $s$ be the generating function of the self-expander asymptotic to $D_{\alpha_{crit}^*(3)}$ which exists by theorem \ref{thm:selfexp}. As in the proof of lemma \ref{lem:nonrep-coal} we can do a parabolic rescaling and therefore assume $s \leq u_\alpha$. Then the evolution of $M_\alpha$ must exist for all $t > 0$ by the comparison principle, lemma \ref{lem:compprinc}, since every finite time singularity must be due to pinching.
\end{proof}

\begin{lemma}
 \label{lem:lowboundcrit}
Any double cone smoothing with angle $\alpha < \alpha_{crit}^*(3)$ must pinch in finite time.
\end{lemma}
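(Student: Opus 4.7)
The plan is to argue by contradiction via a parabolic blow-down of $M_t$ as $t \to \infty$, exploiting that for $\alpha < \alpha_{crit}^*(3)$ no one-sheeted rotationally symmetric self-expander asymptotic to $D_\alpha$ exists. Suppose some smoothing $M_\alpha$ of $D_\alpha$ admits a smooth evolution $M_t$ for all $t > 0$, with generating function $u(\cdot,t)$ and waist $\omega(t) := \min_y u(y,t)$. Since every finite-time singularity of such a flow is a pinch, the assumption of long-time existence forces $\omega(t) > 0$ throughout.

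First I would perform the parabolic rescaling $\tilde u(y,t) := t^{-1/2} u(y\sqrt{t}, t)$, for which the corresponding rescaled mean curvature flow has self-expanders as its stationary points. Applying height and gradient estimates of Ecker--Huisken type, adapted to the rotationally symmetric, cone-asymptotic setting, I would establish precompactness of $\{\tilde M_t\}_{t \geq 1}$ in $C^\infty_{\mathrm{loc}}$ and identify every subsequential limit as $t \to \infty$ as a smooth, rotationally symmetric self-expander asymptotic to $D_\alpha$. By Lemma \ref{lem:ilmanen1} together with the definition $\alpha_{crit}^* = \inf A$, at our angle the only such self-expander is the unique two-sheeted one $N_1$ of Lemma \ref{lem:ilmanen1}(i).

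Each $\tilde M_t$ is connected, being the rescaling of the connected $M_t$, but $N_1$ is two-sheeted. Hence the rescaled bridge width $\omega(t)/\sqrt{t}$ must tend to zero: otherwise a subsequential limit would inherit a positive connecting bridge and be one-sheeted, contradicting the non-existence of such a self-expander at the current angle. The $C^\infty_{\mathrm{loc}}$ precompactness further yields the curvature bound $u_{yy}(y_{\min}(t),t) \leq C/\sqrt{t}$ for $t$ large, where $y_{\min}(t)$ realizes the waist.

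To close the argument, I would apply the evolution equation (\ref{eq:mcfgenfunc}) at the waist, where $u_y = 0$, to obtain
\[
\dot\omega(t) \;=\; u_{yy}(y_{\min}(t),t) \;-\; \frac{1}{\omega(t)}.
\]
For $t$ large enough that $\omega(t) \leq \sqrt{t}/(2C)$, which holds since $\omega/\sqrt{t} \to 0$, this yields $\dot\omega \leq -1/(2\omega)$, hence $(\omega^2)' \leq -1$, forcing $\omega$ to vanish at a finite time and contradicting $T = \infty$. The main obstacle is the first step: rigorously establishing the $C^\infty_{\mathrm{loc}}$ blow-down compactness for our rotationally symmetric, cone-asymptotic flow and identifying all subsequential limits as smooth self-expanders of $D_\alpha$. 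The dichotomy one-sheeted versus two-sheeted and the ODE argument that follow are then routine.
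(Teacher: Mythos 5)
Your overall strategy (contradiction from non-existence of a one-sheeted expander at angle $\alpha$) matches the paper's, but the mechanism you propose --- a parabolic blow-down of the solution $M_t$ itself --- is different from the paper's and contains a genuine gap at the step you yourself flag. Identifying every subsequential blow-down limit as a self-expander is not a routine consequence of Ecker--Huisken type height and gradient estimates: those give compactness, but to force the limit to be \emph{stationary} for the rescaled flow one needs a monotone quantity, and the natural expander functional $\int_M \exp(\abs{x}^2/4)\,d\mathcal{H}^{n-1}$ diverges on complete noncompact surfaces, so there is no off-the-shelf monotonicity formula here. This is precisely why the paper (following Angenent--Chopp--Ilmanen) does not blow down $M_t$ at all. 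Instead it considers the level-set flow $\Gamma_t$ of the cone $D_\alpha$ itself, which is automatically self-similar by uniqueness and the scale invariance of $D_\alpha$; the assumed immortal solution $M_t$ is used only as a barrier (after parabolic rescaling, via Lemma \ref{lem:compprinc}) to show that no compact level-set flow starting outside $D_\alpha$ can reach the origin, hence $0 \in \Gamma_t$, and then the known structure of $\partial\Gamma_1$ as a union of curves from Lemma \ref{lem:ilmanen1} produces a one-sheeted self-expander asymptotic to $\sigma_\alpha \cup \sigma_{\pi-\alpha}$ --- the contradiction. Your route would have to solve the (hard) problem of classifying tangent flows at infinity, which the level-set argument completely sidesteps.

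There is a second, independent gap in your closing ODE argument. If the blow-down converges locally smoothly to the two-sheeted expander, then the waist region of the connected rescaled surface is a thin tube collapsing onto a segment of the rotation axis between the two sheets; this is exactly the region where locally smooth convergence to the two-sheeted limit must fail, so you cannot extract the bound $u_{yy}(y_{\min}(t),t) \leq C/\sqrt{t}$ from that convergence. Moreover $\omega(t)/\sqrt{t} \to 0$ alone does not imply $\omega \to 0$ in finite time (consider $\omega \equiv 1$), so without the curvature bound the inequality $\dot{\omega} \leq -1/(2\omega)$ is not available. One could likely repair this final step with a shrinking-torus (Angenent doughnut) barrier around the degenerating neck, whose aspect ratio you do control, but as written the pinching conclusion does not follow.
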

\begin{proof}
 We follow a level-set flow argument from \cite{angenentilmanen}.

Assume that there is a double cone smoothing $M_\alpha$, $\alpha < \alpha_{crit}^*(3)$, such that its evolution by mean curvature $M_t$ exists for all $t > 0$. 

Let $\Gamma_t$ , $t \geq 0$, be the level-set flow of $D_\alpha$. $\Gamma_t$ can be characterized as follows. $\set{R}^3 \backslash \Gamma_t$ is the union of all level-set flows $\Delta_t$ such that $\Delta_0$ is compact and lies in $\set{R}^3 \backslash D_\alpha$. 

First we show that $0 \in \Gamma_t$. So let $\Delta_t$ be a level-set flow with $\Delta_0 \subset \set{R}^3 \backslash D_\alpha$ compact. $\Delta_0$ must either lie in the convex hull of one of $C_\alpha$ or $C_{\pi-\alpha}$ or outside of $D_\alpha$. Using the maximum principle for one level-set flow and one smooth flow, we see that in the first case $\Delta_0$ is pushed away from $0$ by the graphical self-expanders of Ecker and Huisken (part (i) of lemma \ref{lem:ilmanen1}) and therefore $0 \nelmt \Delta_t$. In the second case we can parabolically rescale $M_\alpha$ as in the proof of lemma \ref{lem:nonrep-coal} and therefore assume $\Delta_0 \cap M_\alpha = \emptyset$. Then we can apply the maximum principle to see that $0 \nelmt \Delta_t$. Hence by the above characterization we must have $0 \in \Gamma_t$.

$\Gamma_1$ is rotationally symmetric, so $\Gamma_1 = M(X)$ for some closed set $X \subset \set{R}^2$. In fact the boundary $\partial \Gamma_1$ is smooth and $\partial X$ consists precisely of curves of the type in lemma \ref{lem:ilmanen1} (see \cite{angenentilmanen}). 

Since $0 \in \Gamma_1$ there must be a curve of type (ii) in lemma \ref{lem:ilmanen1} in $\partial X$, which is asymptotic to $\sigma_\alpha \cup \sigma_{\pi - \alpha}$. This yields a smooth, rotationally symmetric, one-sheeted self-expanding evolution of $D_\alpha$ - a contradiction.
\end{proof}

In view of the definition of $\alpha_{crit}$ the last two lemmas yield the following

\begin{corollary}
 $\alpha_{crit} = \alpha_{crit}^*(3)$.
\end{corollary}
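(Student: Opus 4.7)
The corollary is essentially an immediate consequence of the two preceding lemmas together with the definition of $\alpha_{crit}$ as the common value of the supremum of repulsion angles and the infimum of coalescence angles. The plan is to establish two inequalities, $\alpha_{crit} \leq \alpha_{crit}^*(3)$ and $\alpha_{crit} \geq \alpha_{crit}^*(3)$, each by matching one of Lemmas \ref{lem:upboundcrit} and \ref{lem:lowboundcrit} against the appropriate definition. Because every angle is either a repulsion or a coalescence angle and the two sets are, by Lemmas \ref{lem:justcoalangle} and \ref{lem:justrepangle}, an upper and lower interval respectively, it suffices to show that every $\alpha > \alpha_{crit}^*(3)$ is coalescent and every $\alpha < \alpha_{crit}^*(3)$ is repulsive.

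For the upper bound I would fix an arbitrary $\alpha > \alpha_{crit}^*(3)$. Lemma \ref{lem:upboundcrit} provides a smoothing of $D_\alpha$ that lies outside of the double cone whose mean curvature evolution exists for all $t > 0$. By Lemma \ref{lem:nonrep-coal} this is exactly the condition for $\alpha$ to fail to be a repulsion angle, so by Definition \ref{def:coalangle} such an $\alpha$ is coalescent. Taking the infimum over such $\alpha$ and comparing with $\underline{\alpha}$ from the preceding lemma gives $\alpha_{crit} \leq \alpha_{crit}^*(3)$.

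For the reverse inequality I would fix $\alpha < \alpha_{crit}^*(3)$. By Lemma \ref{lem:lowboundcrit}, every smoothing of $D_\alpha$ must pinch in finite time, and in particular this holds for any smoothing of $D_\alpha$ that lies outside of the double cone. The existence of such a smoothing is immediate from Definition \ref{def:conesmoothing} by any $C^{2,\beta}$ interpolation on $[a,b]$ satisfying $s(y) \geq \gamma|y|$. Hence by Definition \ref{def:repangle} the angle $\alpha$ is a repulsion angle, and taking the supremum yields $\alpha_{crit} \geq \alpha_{crit}^*(3)$.

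All the substantive work sits in the two preceding lemmas, namely the use of the one-sheeted self-expander of Theorem \ref{thm:selfexp} as a barrier in Lemma \ref{lem:upboundcrit} and the level-set flow argument in Lemma \ref{lem:lowboundcrit}, so there is no further obstacle; the corollary itself is a short bookkeeping step combining the two inequalities.
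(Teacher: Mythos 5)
Your argument is correct and matches the paper, which states the corollary without a separate proof as an immediate consequence of Lemmas \ref{lem:upboundcrit} and \ref{lem:lowboundcrit} together with the definition of $\alpha_{crit}$ as the common value $\overline{\alpha}=\underline{\alpha}$. Your write-up just makes the bookkeeping explicit.
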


\subsection{Conclusions}

Using a model based on mean curvature flow, we obtain a critical cone angle $\alpha_{crit}$ for the behaviour of 
oppositely charged droplets of fluid. More precisely this means any smoothing of the double cone (see definition \ref{def:conesmoothing}) with angle
less than $\alpha_{crit}$ must pinch in finite time (repulsion). Assuming a smoothing has an angle greater than $\alpha_{crit}$ and lies outside of the 
double cone, its evolution must exist for all $t > 0$ (coalescence).

We can show that $\alpha_{crit} = \alpha_{crit}^*(3)$, where $\alpha_{crit}^*(3)$ is the critical angle for the existence of smooth, rotationally symmetric, self-expanding, one-sheeted evolutions of double cones and $\alpha_{crit}^*(3) \approx 66^\circ$.
This coincides with observations from experiments (see \cite{naturephys}) which predict a critical angle of $60^\circ-70^\circ$.

Finally we want to compare our model with the one in \cite{physrevletters}. For that approach it is assumed that the bridge between the touching droplets 
minimizes area under a volume constraint. This corresponds to constant mean curvature surfaces of revolution (Delaunay surfaces) which are fitted to linear 
double cones, similarly as in definition \ref{def:conesmoothing} (unlike in definition \ref{def:conesmoothing} the associated generating function is only continuous). The associated capillary 
pressure $p$ is assumed to determine the behaviour.

The critical shape has $p=0$ which yields a rescaled catenoid. Suitable smoothings of the associated surfaces with $p > 0$ provide double cone smoothings in the sense of definition \ref{def:conesmoothing} which pinch in finite time. This is in agreement with \cite{physrevletters}. However lemma \ref{lem:lowboundcrit} shows that down to a certain $p_0 < 0$ the (smoothings of the) associated surfaces with $p \leq 0$ still pinch in finite time under mean curvature flow. Therefore our predicted critical angle is greater than the predicted critical angle from \cite{physrevletters}, which is approximately $59^\circ$.

\appendix
\section{The clearing out lemma}
\label{appendixclearingout}
This version of the clearing out lemma is due to Brakke. It says that if the area ratio of a piece of a solution of mean curvature flow in a ball is initially small, then it must leave the ball with one quarter of the radius after a time proportional to the initial area ratio times the radius squared. Proofs can be found in \cite{brakkemcf} or \cite{eckermcf}.

\begin{appendixtheorem}
Let $\lrrb{M_t}_{t>0}$ be complete, properly embedded hypersurfaces in $\set{R}^n$, $n \geq 2$, evolving by mean curvature flow. There exist constants $c(n) > 0$ and $\varepsilon_0(n) > 0$ such that if $M_0$ satisfies 
$$\mathcal{H}^{n-1} \lrrb{M_0 \cap B_\rho \lrrb{x_0}} \leq \varepsilon_0 \rho^{n-1}$$
for some $x_0 \in \set{R}^n$ and $\rho > 0$, then
$$\mathcal{H}^{n-1} \lrrb{M_t \cap B_{\frac{\rho}{4}} \lrrb{x_0}} = 0$$
for $t = c \smallskip  \varepsilon_0 \rho^2$.
\end{appendixtheorem}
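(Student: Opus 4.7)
The plan is to argue by contradiction using Huisken's monotonicity formula together with the Euclidean density lower bound at smooth points of the flow. By parabolic rescaling $\lrrb{x,t}\mapsto\lrrb{\rho x, \rho^2 t}$ and translation, I first reduce to $x_0=0$ and $\rho=1$; then I set $t_* := c \varepsilon_0$ and suppose for contradiction that some $y \in M_{t_*} \cap B_{1/4}\lrrb{0}$ exists. For the backward heat kernel
$$\Phi_{\lrrb{y,t_*}}\lrrb{x,t} = \lrrb{4\pi\lrrb{t_*-t}}^{-(n-1)/2} \exp\lrrb{-\frac{\abs{x-y}^2}{4\lrrb{t_*-t}}},$$
Huisken's monotonicity formula says that $t \mapsto \int_{M_t} \Phi_{\lrrb{y,t_*}}\lrrb{\cdot,t} \, d\mathcal{H}^{n-1}$ is non-increasing on $\lrb{0,t_*}$. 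Since $y$ is a smooth point of $M_{t_*}$, the limit of this quantity as $t \rightarrow t_*^-$ equals the Euclidean density of the tangent plane, which is at least $1$. Comparing with the value at $t=0$ therefore yields
$$\int_{M_0} \lrrb{4\pi t_*}^{-(n-1)/2} \exp\lrrb{-\frac{\abs{x-y}^2}{4 t_*}}\, d\mathcal{H}^{n-1}\lrrb{x} \geq 1.$$

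The strategy now is to show that this same integral is \emph{strictly less than} $1$ for $\varepsilon_0$ small and $c$ chosen appropriately, producing a contradiction. I split the integral into $M_0 \cap B_1\lrrb{0}$ and $M_0 \setminus B_1\lrrb{0}$. The Gaussian has standard deviation of order $\sqrt{t_*} = \sqrt{c\varepsilon_0} \ll 1$ and is therefore essentially concentrated in a tiny ball about $y \in B_{1/4}\lrrb{0}$, well inside $B_1\lrrb{0}$. On the inner piece, the hypothesis combined with a dyadic iteration of the monotonicity inequality at intermediate scales $2^{-k}$ (which propagates the single-scale area ratio bound at $\rho = 1$ down to smaller balls around $y$) produces an upper bound that can be made strictly less than $1/2$ by choosing $\varepsilon_0$ small and $c$ suitably, in a quantitative way depending only on $n$. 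On the outer piece $\abs{x-y}\geq 3/4$, so the pointwise Gaussian factor decays like $\exp\lrrb{-9/\lrrb{64 t_*}}$, an overwhelmingly small number as $c\varepsilon_0 \rightarrow 0$.

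The hard part is making the outer piece rigorous: the hypothesis imposes no area control on $M_0$ outside $B_1\lrrb{0}$, and a merely properly embedded solution can have unbounded area there, so the outer integral cannot be directly estimated by pulling the exponential out of the Hausdorff integral. The standard resolution, following Ecker, is to use a localized monotonicity formula in which $\Phi_{\lrrb{y,t_*}}$ is multiplied by a spatial cutoff $\chi\in C_c^\infty\lrrb{B_1\lrrb{0}}$ with $\chi\equiv 1$ on $B_{3/4}\lrrb{0}$. The cutoff introduces error terms involving $\abs{\nabla\chi}^2 \Phi / \chi$ integrated over $M_t$, but $\nabla\chi$ is supported in an annulus at distance at least $1/4$ from $y$, so these errors are dominated by the concentrated Gaussian and are exponentially small in $1/t_*$. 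Combining the localized monotonicity inequality with the inner estimate then produces the desired strict upper bound below $1$, contradicting the lower bound $\geq 1$ and completing the proof. A detailed execution of this strategy may be found in \cite{brakkemcf} and \cite{eckermcf}.
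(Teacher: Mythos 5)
First, a point of orientation: the paper does not prove this theorem at all --- it explicitly defers to Brakke \cite{brakkemcf} and Ecker \cite{eckermcf} --- so your proposal can only be measured against the cited literature, not against an in-paper argument. Your overall route (a Gaussian density lower bound $\geq 1$ at a hypothetical point $y \in M_{t_*}\cap B_{1/4}$, played off against a localized Huisken monotonicity formula evaluated at time $0$) is a legitimate and standard way to prove a clearing-out statement, close to the treatment in Ecker's book. You should be aware, though, that Brakke's original proof is genuinely different: it derives a differential inequality for a localized area $\int_{M_t}\varphi^3\,d\mathcal{H}^{n-1}$ from the first variation formula together with the Michael--Simon Sobolev inequality, forcing that quantity to vanish after a time proportional to a power of the initial area ratio; no monotonicity formula appears there.

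The concrete problem with your sketch is the justification of the inner estimate. You assert that a ``dyadic iteration of the monotonicity inequality at intermediate scales propagates the single-scale area ratio bound at $\rho=1$ down to smaller balls around $y$.'' Monotonicity transfers density information from earlier times and larger scales to later times and smaller scales; it cannot convert the time-zero area bound at scale $1$ into time-zero area bounds at scales $2^{-k}$, because there is no earlier time to appeal to --- the initial surface is free to concentrate its entire allowance $\varepsilon_0\rho^{n-1}$ of area in an arbitrarily small ball about $y$. Fortunately no such propagation is needed: the crude bound $\int_{M_0\cap B_1}\Phi_{(y,t_*)}(\cdot,0)\,d\mathcal{H}^{n-1}\leq (4\pi t_*)^{-(n-1)/2}\varepsilon_0$ already suffices, \emph{provided the constants are fixed in the right order}: first fix $\tau:=t_*/\rho^2$ as a small dimensional constant (small enough that the cutoff is supported well inside $B_1$), then choose $\varepsilon_0<\tfrac{1}{2}(4\pi\tau)^{(n-1)/2}$, and finally set $c:=\tau/\varepsilon_0$. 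This order of quantifiers is the real content of the lemma, and your sketch leaves it implicit while offering an invalid mechanism in its place. Two further loose ends: your cutoff error terms are bounded by the exponentially small Gaussian factor \emph{times the area of $M_t$ in the annulus at intermediate times} $t\in(0,t_*)$, which is not given by the hypothesis and must be supplied by a local area non-increase estimate (or avoided entirely by using Ecker's explicit test function $\bigl(1-(|x-y|^2+2(n-1)(t-t_*))/r^2\bigr)_+^3$, for which the localized monotonicity holds with no error term); and the lower bound $\geq 1$ should be stated for that same localized quantity, which is where properness and smoothness of $M_{t_*}$ actually enter.
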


\section*{Acknowledgements}
The author is partially supported by The Leverhulme Trust.

\end{document}